\def\today{\ifcase \month \or
   January \or February \or March \or April \or
   May \or June \or July \or August \or
   September \or October \or November \or December \fi
   \space\number\day , \number\year}
  \newcommand\@dotsep{4.5}
  \def\@tocline#1#2#3#4#5#6#7{\relax
     \ifnum #1>\c@tocdepth 
     \else
     \par \addpenalty\@secpenalty\addvspace{#2}%
     \begingroup \hyphenpenalty\@M
     \@ifempty{#4}{%
     \@tempdima\csname r@tocindent\number#1\endcsname\relax
        }{%
         \@tempdima#4\relax
           }%
      \parindent\z@ \leftskip#3\relax \advance\leftskip\@tempdima\relax
      \rightskip\@pnumwidth plus1em \parfillskip-\@pnumwidth
       #5\leavevmode\hskip-\@tempdima #6\relax
       \leaders\hbox{$\m@th
       \mkern \@dotsep mu\hbox{.}\mkern \@dotsep mu$}\hfill
       \hbox to\@pnumwidth{\@tocpagenum{#7}}\par
       \nobreak
        \endgroup
         \fi}
\begin{document}


\makeatletter
\@addtoreset{figure}{section}
\def\thefigure{\thesection.\@arabic\c@figure}
\def\fps@figure{h,t}
\@addtoreset{table}{bsection}

\def\thetable{\thesection.\@arabic\c@table}
\def\fps@table{h, t}
\@addtoreset{equation}{section}
\def\theequation{
\arabic{equation}}
\makeatother

\newcommand{\bfi}{\bfseries\itshape}

\newtheorem{theorem}{Theorem}
\newtheorem{acknowledgment}[theorem]{Acknowledgment}
\newtheorem{corollary}[theorem]{Corollary}
\newtheorem{definition}[theorem]{Definition}
\newtheorem{example}[theorem]{Example}
\newtheorem{lemma}[theorem]{Lemma}
\newtheorem{notation}[theorem]{Notation}
\newtheorem{proposition}[theorem]{Proposition}
\newtheorem{remark}[theorem]{Remark}
\newtheorem{setting}[theorem]{Setting}

\numberwithin{theorem}{section}
\numberwithin{equation}{section}

\renewcommand{\1}{{\bf 1}}
\newcommand{\Ad}{{\rm Ad}}
\newcommand{\alg}{{\rm alg}}
\newcommand{\ad}{{\rm ad}}
\newcommand{\botimes}{\bar\otimes}
\newcommand{\Ci}{{\mathcal C}^\infty}
\newcommand{\Cint}{{\mathcal C}^\infty_{\rm int}}
\newcommand{\de}{{\rm d}}
\newcommand{\ee}{{\rm e}}
\newcommand{\End}{{\rm End}\,}
\newcommand{\id}{{\rm id}}
\newcommand{\ie}{{\rm i}}
\newcommand{\ind}{\mathop{\rm ind}}
\newcommand{\Hom}{{\rm Hom}\,}
\newcommand{\Img}{{\rm Im}\,}
\newcommand{\Ind}{{\rm Ind}}
\newcommand{\Ker}{{\rm Ker}\,}
\newcommand{\Lie}{\text{\bf L}}
\newcommand{\Mt}{{{\mathcal M}_{\text t}}}
\newcommand{\Ran}{{\rm Ran}\,}
\renewcommand{\Re}{{\rm Re}\,}
\newcommand{\spa}{{\rm span}\,}
\newcommand{\Tr}{{\rm Tr}\,}
\newcommand{\tw}{\ast_{\rm tw}}
\newcommand{\Op}{{\rm Op}}
\newcommand{\U}{{\rm U}}

\newcommand{\UC}{{{\mathcal U}{\mathcal C}}}
\newcommand{\UCb}{{{\mathcal U}{\mathcal C}_b}}
\newcommand{\LUCb}{{{\mathcal L}{\mathcal U}{\mathcal C}_b}}
\newcommand{\RUCb}{{{\mathcal R}{\mathcal U}{\mathcal C}_b}}

\newcommand{\CC}{{\mathbb C}}
\newcommand{\RR}{{\mathbb R}}
\newcommand{\TT}{{\mathbb T}}

\newcommand{\Ac}{{\mathcal A}}
\newcommand{\Bc}{{\mathcal B}}
\newcommand{\Cc}{{\mathcal C}}
\newcommand{\Dc}{{\mathcal D}}
\newcommand{\Fc}{{\mathcal F}}
\newcommand{\Hc}{{\mathcal H}}
\newcommand{\Lc}{{\mathcal L}}
\renewcommand{\Mc}{{\mathcal M}}
\newcommand{\Nc}{{\mathcal N}}
\newcommand{\Qc}{{\mathcal Q}}
\newcommand{\Vc}{{\mathcal V}}
\newcommand{\Wig}{{\mathcal W}}
\newcommand{\Xc}{{\mathcal X}}
\newcommand{\Yc}{{\mathcal Y}}

\newcommand{\Lg}{{\mathfrak L}}
\newcommand{\Sg}{{\mathfrak S}}

\renewcommand{\gg}{{\mathfrak g}}
\newcommand{\hg}{{\mathfrak h}}

\newcommand{\ZZ}{\mathbb Z}
\newcommand{\NN}{\mathbb N}
\newcommand{\BB}{\mathbb B}
\newcommand{\HH}{\mathbb H}

\newcommand{\hake}[1]{\langle #1 \rangle }

\newcommand{\scalar}[2]{\langle #1 ,#2 \rangle }
\newcommand{\vect}[2]{(#1_1 ,\ldots ,#1_{#2})}
\newcommand{\norm}[1]{\Vert #1 \Vert }
\newcommand{\normrum}[2]{{\norm {#1}}_{#2}}

\newcommand{\upp}[1]{^{(#1)}}
\newcommand{\p}{\partial}

\newcommand{\opn}{\operatorname}
\newcommand{\slim}{\operatornamewithlimits{s-lim\,}}
\newcommand{\sgn}{\operatorname{sgn}}

\newcommand{\seq}[2]{#1_1 ,\dots ,#1_{#2} }
\newcommand{\loc}{_{\opn{loc}}}

\makeatletter
\title[On Wigner transforms in infinite dimensions]{On Wigner transforms in infinite dimensions}
\author{Ingrid Belti\c t\u a,   
Daniel Belti\c t\u a, 
and Marius M\u antoiu}
\address{Institute of Mathematics ``Simion Stoilow'' 
of the Romanian Academy, 
P.O. Box 1-764, Bucharest, Romania}
\email{ingrid.beltita@gmail.com, Ingrid.Beltita@imar.ro}
\address{Institute of Mathematics ``Simion Stoilow'' 
of the Romanian Academy, 
P.O. Box 1-764, Bucharest, Romania}
\email{beltita@gmail.com, Daniel.Beltita@imar.ro}
\address{Departamento de Matem\'aticas, Universidad de Chile, Las Palmeras 3425,
Casilla 653, Santiago, Chile}
\email{mantoiu@uchile.cl}
\keywords{Wigner transform; Gaussian measure; infinite-dimensional Lie group}
\subjclass[2000]{Primary 22E66; Secondary 28C05, 28C20, 22E70}
\thanks{The research of I. Belti\c t\u a and D. Belti\c t\u a has been partially supported by the Grant
of the Romanian National Authority for Scientific Research, CNCS-UEFISCDI,
project number PN-II-ID-PCE-2011-3-0131. 
M. M\u antoiu has been supported by the Fondecyt Project 1120300 and the N\'ucleo Milenio de F\'isica Matem\'atica RC120002.}
\date{January 22, 2015}
\makeatother

\begin{abstract}
We investigate the Schr\"odinger representations of certain infinite-dimensional Heisenberg groups, 
using their corresponding Wigner transforms. 
\end{abstract}

\maketitle

\tableofcontents

\section{Introduction}

The topic of this paper belongs to representation theory of Heisenberg groups, 
more specifically we investigate to what extent the square-integrability properties of the Schr\"odinger representations 
carry over to the setting of infinite-dimensional Heisenberg groups. 
Recall that 
the Schr\"odinger representation of the $(2n+1)$-dimensional Heisenberg group $\HH_{2n+1}$ 
is a group representation 
$$\pi\colon \HH_{2n+1}\to\Bc(L^2(\RR^n,\de\lambda))$$ 
and its corresponding Wigner transform is a unitary operator 
$$\Wig\colon L^2(\RR^n,\de\lambda)\bar\otimes\overline{L^2(\RR^n,\de\lambda)}\to 
L^2(\RR^n\times\RR^n,\de\mu)$$
where we denote by $\de\lambda$ the Lebesgue measure on $\RR^n$, by $\Bc(L^2(\RR^n,\de\lambda))$ 
the bounded linear operators on the complex Hilbert space $L^2(\RR^n,\de\lambda)$, 
and by $\de\mu$ a suitable normalization of the measure $\de\lambda\otimes\de\lambda$ 
on $\RR^n\times\RR^n$. 
One way to define the Wigner transform is that of defining $\Wig(f\otimes\overline{\varphi})\in L^2(\RR^n\times\RR^n,\de\mu)$ 
as a Fourier transform of the representation coefficient 
$(\pi(\cdot)f\mid\varphi)\vert_{\RR^n\times\RR^n\times\{0\}}\in L^2(\RR^n\times\RR^n\times\{0\},\de\lambda\otimes\de\lambda)$, 
recalling that $\HH_{2n+1}=\RR^n\times\RR^n\times\RR$ as smooth manifolds. 

As the translation invariance property of the Lebesgue measure plays a central role in the above discussion, 
it is not straightforward to replace here $\RR^n$ by an infinite-dimensional real Hilbert space.  
It is customary in the infinite-dimensional analysis to try to replace the Lebesgue measure by a Gaussian measure, 
and this is what we will do in the present paper as well. 
Furthermore, the main problems that we must address are to construct the Wigner transform 
as a unitary operator on square-integrable functions of infinitely many variables and  
to realize the image of that unitary operator as an $L^2$-space, 
which amounts to determining the infinite-dimensional analogue 
of the measure~$\de\mu$ from the above paragraph. 
In some sense, these problems form a complement to the ones addressed in our recent investigation 
of square-integrable families of operators~\cite{BBM14}. 
 
The present paper is organized as follows. 
Sections \ref{Sect2} and \ref{Sect3} develop 
an abstract framework for the study of Wigner transforms associated to unitary representations 
of general topological groups. 
The main ingredients of that framework are the Fourier transforms on uniform spaces and 
an operator calculus that involves the Banach algebra structures 
of the dual of the spaces of left uniformly continuous functions on topological groups. 
Then Section~\ref{Sect4} records some computations with Gaussian functions 
and their Wigner transforms. 
In Section~\ref{Sect5} we introduce the infinite-dimensional Heisenberg groups and their Schr\"odinger representations 
for which we construct their corresponding Wigner transforms in Theorem~\ref{orth_th}. 

Throughout this paper we denote by $\bar\otimes$ the Hilbertian scalar product and 
by $\Bc(\Xc)$ and $\Xc'$ the spaces of all bounded linear operators and bounded linear functionals 
on some Banach space~$\Xc$, respectively, 
and it will always be clear from the context if the ground field is $\RR$ or $\CC$.

\section{Preliminaries on Fourier transforms on uniform spaces}\label{Sect2}

Let $X$ be any Hausdorff uniform space. 
We denote by  $\UC$ and $\UCb$ the various spaces of uniformly continuous functions 
and uniformly continuous bounded functions on any uniform space, respectively. 
Note that $\UCb(X):=\UCb(X,\CC)$ is a Banach space, 
so we may consider its dual Banach space $\Mc(X):=\UCb(X)'$,  
which should be thought of as a space of generalized complex measures on $X$.

\begin{definition}\label{gen1}
\normalfont
We define the \emph{uniform space dual}\index{dual!uniform space} 
to $X$ as 
$$X^\nabla:=\UC(X,\RR)$$ 
endowed with the uniform structure of pointwise convergence. 
\end{definition}

\begin{remark}\label{gen2}
\normalfont
There exists a natural injective mapping 
$$\eta_X\colon X\to(X^\nabla)^\nabla,\quad x\mapsto\eta_x$$
where $\eta_x(f)=f(x)$ for every $f\in\UC(X,\RR)$ and $x\in X$. 
It is easily seen that $\eta$ is uniformly continuous. 
\end{remark}

\begin{definition}\label{gen3}
\normalfont
The \emph{Fourier transform}\index{Fourier transform!on uniform spaces} 
on $X$ is the linear mapping  
$$\Fc\colon\Mc(X)\to\ell^\infty(X^\nabla),\quad 
(\Fc\mu)(f)=\langle\mu,\ee^{\ie f}\rangle$$ 
for $f\in X^\nabla$ and $\mu\in\Mc(X)$. 
We will also denote $\widehat{\mu}:=\Fc\mu$ for $\mu\in\Mc(X)$. 

Note that $\Fc$ is a bounded linear mapping and in fact $\Vert\Fc\Vert\le1$. 
\end{definition}

\begin{proposition}\label{gen3.5}
The Fourier transform $\Fc\colon\Mc(X)\to\ell^\infty(X^\nabla)$ is injective. 
\end{proposition}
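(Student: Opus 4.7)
The plan is to deduce $\mu=0$ from $\widehat{\mu}\equiv 0$ by recovering $\langle\mu,g\rangle$ for every real-valued $g \in \UCb(X,\RR)$ out of the family of complex numbers $\langle\mu,\ee^{\ie tg}\rangle$, $t\in\RR$. Since an arbitrary $\varphi \in \UCb(X)$ splits into its real and imaginary parts, both belonging to $\UCb(X,\RR)$, and $\mu$ is complex-linear, this will suffice.

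Fix such a $g$, and write $M := \|g\|_\infty < \infty$. Note that $g \in X^\nabla = \UC(X,\RR)$, and the same is true of $tg$ for every $t \in \RR$; hence by hypothesis
$$\langle \mu, \ee^{\ie tg} \rangle = 0 \quad \text{for all } t \in \RR.$$
The key observation is that the powers $g^n$ all lie in $\UCb(X,\RR)$ --- boundedness is clear, and the uniform continuity of $g^n$ follows from the factorisation
$g^n(x)-g^n(y) = (g(x)-g(y))\sum_{k=0}^{n-1} g(x)^k g(y)^{n-1-k}$ --- and the exponential series
$$\sum_{n=0}^\infty \frac{(\ie t)^n}{n!}\, g^n$$
converges to $\ee^{\ie tg}$ in the sup norm of $\UCb(X)$, its tail being dominated by $\sum_{n>N} (|t|M)^n/n!$.

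Applying the continuous functional $\mu$ term by term then yields
$$0 \;=\; \langle\mu, \ee^{\ie tg}\rangle \;=\; \sum_{n=0}^\infty \frac{(\ie t)^n}{n!}\,\langle\mu,g^n\rangle, \qquad t \in \RR.$$
The right-hand side is a power series in $t$ with coefficients $a_n$ satisfying $|a_n| \le \|\mu\|\, M^n / n!$; it is therefore an entire function of $t$ that vanishes identically on $\RR$. All of its Taylor coefficients must accordingly vanish, and in particular $\langle\mu,g\rangle = 0$. As this holds for every $g \in \UCb(X,\RR)$, we conclude $\mu = 0$.

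The only real technical point is the norm convergence of the exponential series in $\UCb(X)$, which crucially uses the boundedness of~$g$; this is precisely why the reduction to $g \in \UCb(X,\RR)$ is made at the outset, rather than working directly with an arbitrary, possibly unbounded, $f \in X^\nabla$.
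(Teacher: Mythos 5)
Your proof is correct. It rests on the same basic strategy as the paper's: reduce to a real-valued bounded $g\in\UCb(X)$ (the reduction via real and imaginary parts is exactly the paper's first step), restrict the vanishing Fourier transform to the line $t\mapsto tg$ in $X^\nabla$, and recover the linear term $\langle\mu,g\rangle$ from $t\mapsto\langle\mu,\ee^{\ie tg}\rangle\equiv 0$. The difference lies in the execution: the paper uses only the first-order expansion, namely that $\frac{\ee^{\ie tr}-1}{t}\to\ie r$ uniformly for $r$ in compact subsets of $\RR$, hence $\frac{\ee^{\ie tg}-\1}{t}\to\ie g$ in the norm of $\UCb(X)$, so that $\ie\langle\mu,g\rangle$ appears directly as the derivative at $t=0$ of the identically vanishing function $t\mapsto(\Fc\mu)(tg)$; you instead expand the full exponential series $\sum_n\frac{(\ie t)^n}{n!}g^n$ (checking, correctly, that each $g^n\in\UCb(X)$ and that the series converges in sup norm thanks to $\Vert g\Vert_\infty<\infty$), push $\mu$ through by norm continuity, and invoke the identity theorem for power series to kill all coefficients, in particular $\langle\mu,g\rangle$. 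Your route costs a bit more machinery (all powers of $g$, analyticity in $t$) but yields slightly more as a by-product, namely $\langle\mu,g^n\rangle=0$ for every $n$; the paper's differentiation argument is leaner, needing only the boundedness of $g$ to get the single first-order limit in $\UCb(X)$. Both hinge on the same crucial point you flag at the end: norm convergence in $\UCb(X)$ requires the boundedness of $g$, which is why one cannot work directly with an arbitrary $f\in X^\nabla$.
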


\begin{proof}
Let $\mu\in\Mc(X)$ with $\Fc\mu=0$. 
In order to prove that $\mu=0$, it suffices to show that for an arbitrary 
\emph{real-valued} function $f\in\UCb(X)$ we have $\langle\mu,f\rangle=0$. 
First recall that we have 
$$\lim\limits_{t\to 0}\frac{\ee^{\ie tr}-1}{t}-\ie r=0 $$
uniformly for $r$ in any compact subset of~$\RR$. 
Therefore 
$$\lim\limits_{t\to 0}\frac{\ee^{\ie tf}-1}{t}=\ie f $$
in $\UCb(X)$, hence 
$$\ie\langle\mu,f\rangle=\lim\limits_{t\to 0}\frac{(\Fc\mu)(tf)-(\Fc\mu)(0)}{t}=0,$$
and we are done. 
\end{proof}

\begin{lemma}\label{gen4}
If $\mu\in\Mc^+(X)$ then the following assertions hold: 
\begin{enumerate}
\item\label{gen4_item1} 
We have $\Vert\Fc\mu\Vert_\infty\le\langle\mu,\1\rangle$. 
\item\label{gen4_item2} 
For all $f,h\in X^\nabla$ we have 
$$\vert(\Fc\mu)(f)-(\Fc\mu)(h)\vert^2\le 
2\langle\mu,\1\rangle(\langle\mu,\1\rangle-\Re(\Fc\mu)(f-h)). $$
\end{enumerate}
\end{lemma}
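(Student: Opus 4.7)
The plan is to derive both inequalities directly from the positivity of $\mu$, relying on the fact that $\UCb(X)$ is a commutative $C^*$-algebra so that $\mu\in\Mc^+(X)$ is automatically continuous, satisfies $\langle\mu,\1\rangle=\Vert\mu\Vert$, and obeys the Cauchy--Schwarz-type inequality $|\langle\mu,\bar{g}k\rangle|^2\le\langle\mu,|g|^2\rangle\langle\mu,|k|^2\rangle$ for all $g,k\in\UCb(X,\CC)$.

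For part \eqref{gen4_item1}, I would just note that for any $f\in X^\nabla$ the function $\ee^{\ie f}$ belongs to $\UCb(X,\CC)$ (uniform continuity of $f$ composed with the uniformly continuous function $\ee^{\ie\,\cdot}$ on bounded sets works because $t\mapsto \ee^{\ie t}$ is Lipschitz on $\RR$), and $|\ee^{\ie f}|\equiv 1$. Hence for every $x\in X$, $\pm\Re\,\ee^{\ie f}\le\1$ and $\pm\Img\,\ee^{\ie f}\le\1$ pointwise, so by positivity of $\mu$ applied to these real functions we get $|\langle\mu,\ee^{\ie f}\rangle|\le\langle\mu,\1\rangle$, taking the sup over~$f$.

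For part \eqref{gen4_item2}, I would write $(\Fc\mu)(f)-(\Fc\mu)(h)=\langle\mu,\ee^{\ie f}-\ee^{\ie h}\rangle$ and apply Cauchy--Schwarz with $g=\1$ and $k=\ee^{\ie f}-\ee^{\ie h}$:
\begin{equation*}
\vert(\Fc\mu)(f)-(\Fc\mu)(h)\vert^2
\le \langle\mu,\1\rangle\,\bigl\langle\mu,\,\vert\ee^{\ie f}-\ee^{\ie h}\vert^2\bigr\rangle.
\end{equation*}
A direct expansion gives the pointwise identity $|\ee^{\ie f}-\ee^{\ie h}|^2=2-\ee^{\ie(f-h)}-\ee^{-\ie(f-h)}=2\bigl(\1-\Re\,\ee^{\ie(f-h)}\bigr)$, so applying $\mu$ yields
\begin{equation*}
\bigl\langle\mu,\vert\ee^{\ie f}-\ee^{\ie h}\vert^2\bigr\rangle
=2\bigl(\langle\mu,\1\rangle-\Re(\Fc\mu)(f-h)\bigr),
\end{equation*}
and combining the two displays gives the stated inequality.

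The only genuinely nontrivial point is the Cauchy--Schwarz inequality for the positive functional $\mu$ on $\UCb(X)$; this is where one really uses that $\UCb(X)$ is a commutative unital $C^*$-algebra, so that $(g,k)\mapsto\langle\mu,\bar gk\rangle$ is a positive semidefinite Hermitian form. Once this is in hand, everything else is an algebraic manipulation with $\ee^{\ie f}$, and no further structure on the uniform space~$X$ is needed.
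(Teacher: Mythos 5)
Your part~\eqref{gen4_item2} is exactly the paper's argument: Cauchy--Schwarz for the positive functional $\mu$ on the commutative unital $C^*$-algebra $\UCb(X)$ with $g=\1$, $k=\ee^{\ie f}-\ee^{\ie h}$, followed by the pointwise identity $\vert\ee^{\ie f}-\ee^{\ie h}\vert^2=2\bigl(\1-\Re\,\ee^{\ie(f-h)}\bigr)$. No difference there.

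For part~\eqref{gen4_item1} your route differs from the paper's (which simply combines $\Vert\Fc\Vert\le1$, i.e.\ $\Vert\ee^{\ie f}\Vert_\infty=1$, with the $C^*$-algebra fact $\Vert\mu\Vert=\langle\mu,\1\rangle$ for positive functionals), and as written it has a small slip: from $\pm\Re\,\ee^{\ie f}\le\1$ and $\pm\Img\,\ee^{\ie f}\le\1$ pointwise, positivity only gives $\vert\Re\langle\mu,\ee^{\ie f}\rangle\vert\le\langle\mu,\1\rangle$ and $\vert\Img\langle\mu,\ee^{\ie f}\rangle\vert\le\langle\mu,\1\rangle$, hence $\vert\langle\mu,\ee^{\ie f}\rangle\vert\le\sqrt{2}\,\langle\mu,\1\rangle$, not the claimed bound. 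The fix is immediate with tools you already use: either apply your Cauchy--Schwarz inequality with $g=\1$, $k=\ee^{\ie f}$ to get $\vert\langle\mu,\ee^{\ie f}\rangle\vert^2\le\langle\mu,\1\rangle\langle\mu,\vert\ee^{\ie f}\vert^2\rangle=\langle\mu,\1\rangle^2$, or use the rotation trick: writing $\langle\mu,\ee^{\ie f}\rangle=r\ee^{\ie\alpha}$ with $r\ge0$, one has $r=\langle\mu,\cos(f-\alpha)\rangle\le\langle\mu,\1\rangle$ since $\mu$ is self-adjoint and $\cos(f-\alpha)\le\1$. With that one-line repair the proposal is correct; the rest (uniform continuity and boundedness of $\ee^{\ie f}$, the justification of Cauchy--Schwarz via the positive semidefinite form $(g,k)\mapsto\langle\mu,\bar g k\rangle$) is fine and matches what the paper takes for granted.
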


\begin{proof}
For Assertion~\eqref{gen4_item1} recall from Definition~\ref{gen3} that $\Vert\Fc\Vert\le1$, 
hence $$\Vert\Fc\mu\Vert_\infty\le\Vert\mu\Vert=\langle\mu,\1\rangle$$
where the latter equality follows since $\mu\colon\UCb(X)\to\CC$ is a positive linear functional on 
the $C^*$-algebra $\UCb(X)$.

To prove Assertion~\eqref{gen4_item2}, let $f,h\in X^\nabla$ arbitrary. 
By using the Cauchy-Schwartz inequality  we get 
\allowdisplaybreaks
\begin{align}
\vert(\Fc\mu)(f)-(\Fc\mu)(h)\vert^2 
&= \vert\langle\mu,\ee^{\ie f}-\ee^{\ie h}\rangle\vert^2 \nonumber \\
&\le \langle\mu,\vert\ee^{\ie f}-\ee^{\ie h}\vert^2\rangle 
\langle\mu,\1^2\rangle \nonumber \\
&=\langle\mu,\1\rangle \langle\mu,2-2\Re(\ee^{\ie(f-h)})\rangle \nonumber \\
&=2\langle\mu,\1\rangle(\langle\mu,\1\rangle-\langle\mu,\Re(\ee^{\ie(f-h)})\rangle)  \nonumber \\
&=2\langle\mu,\1\rangle(\langle\mu,\1\rangle-\Re((\Fc\mu)(f-h))),\nonumber
\end{align}
where we also used the fact that 
$\vert \ee^{\ie t}-\ee^{\ie s}\vert^2=2-2\Re(\ee^{\ie(t-s)})$ for all $t,s\in\RR$. 
\end{proof}

\begin{lemma}\label{map5}
Let $X$ and $Y$ be uniform spaces. 
Assume that $D$ is a dense subset of $X$ and 
$\Ac$ 
is a uniformly equi-continuous family of mappings from $X$ into $Y$.
Then the following uniform structures on the mappings from $X$ into $Y$  
induce the same uniform structure on $\Ac$:
\begin{itemize}
\item[{\rm(a)}] the structure of uniform convergence on 
the precompact subsets of $X$; 
\item[{\rm(b)}] the structure of pointwise convergence on $X$; 
\item[{\rm(c)}] the structure of pointwise convergence on $D$. 
\end{itemize}
\end{lemma}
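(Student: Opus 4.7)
The plan is to exploit the standard hierarchy of uniform structures: (a) is automatically finer than (b), which is finer than (c), so the entire content of the lemma is the reverse implication, namely that on the equi-continuous family $\Ac$ the uniformity of pointwise convergence on $D$ is already as fine as that of uniform convergence on precompact subsets of $X$. I would therefore fix a precompact subset $K\subseteq X$ and an entourage $W$ of $Y$, and look for a finite set $F\subseteq D$ together with an entourage $W'$ of $Y$ such that, whenever $f,g\in\Ac$ satisfy $(f(d),g(d))\in W'$ for every $d\in F$, one has $(f(x),g(x))\in W$ for every $x\in K$. Producing such $F$ and $W'$ from $K$ and $W$ amounts to showing that the basic entourages in the structure (a) on $\Ac$ contain basic entourages of the structure (c).

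First I would pick a symmetric entourage $W'$ of $Y$ with $W'\circ W'\circ W'\subseteq W$. Uniform equi-continuity of $\Ac$ provides a symmetric entourage $U$ of $X$ such that $(x,x')\in U$ implies $(f(x),f(x'))\in W'$ simultaneously for all $f\in\Ac$. This is the step where the equi-continuity assumption is actually used, and it is what lets me translate closeness in $X$ to uniform closeness in $Y$ for \emph{all} members of $\Ac$ at once.

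Next I would combine precompactness of $K$ with the density of $D$ to get a finite subset $F=\{d_1,\dots,d_n\}\subseteq D$ with the property $K\subseteq\bigcup_{i=1}^n U[d_i]$. Concretely, I would choose a symmetric $U'$ with $U'\circ U'\subseteq U$, cover $K$ by finitely many $U'$-balls $U'[x_1],\dots,U'[x_n]$ centred in $K$ by precompactness, and then, using density, select $d_i\in D$ with $(x_i,d_i)\in U'$; the inclusion $U'\circ U'\subseteq U$ yields the desired covering by $U[d_i]$. This combination of precompactness and density is the real technical heart of the argument and the step I expect to require the most care with entourages.

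Finally, the conclusion follows by a chaining argument: given $x\in K$, there is an index $i$ with $(x,d_i)\in U$, and hence $(f(x),f(d_i))\in W'$ and $(g(x),g(d_i))\in W'$ for all $f,g\in\Ac$; if moreover $(f(d_i),g(d_i))\in W'$, then
\[
(f(x),g(x))\in W'\circ W'\circ W'\subseteq W.
\]
This shows that the basic entourage of the structure (a) determined by $(K,W)$ contains the basic entourage of the structure (c) determined by $(F,W')$, completing the proof that the three uniformities coincide on $\Ac$.
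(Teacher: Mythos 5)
Your argument is correct: the three-entourage chaining via uniform equi-continuity, together with covering the precompact set $K$ by finitely many small balls whose centres are then approximated from the dense set $D$, is exactly the standard Ascoli--Arzel\`a-type argument, and the minor technical points (entourage balls are neighbourhoods, so density supplies the $d_i$; the nontrivial direction is only (c)$\Rightarrow$(a) on $\Ac$) are handled adequately. The paper itself gives no details and simply cites Bourbaki's proof of the Ascoli theorem for uniform spaces, so your proposal is essentially the proof that citation refers to, written out in full.
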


\begin{proof}
See the proof of the Ascoli-Arzel\`a theorem on uniform spaces in \cite{Bo69}.  
\end{proof}

We now introduce the linear space of tight measures $\Mt(X)$ on a uniform space~$X$  
(see \cite[Sect. 5.1]{Pa13} for more information in this connection). 
Namely, $\Mt(X)$ is the set of all linear functionals $\varphi\colon\UCb(X)\to{\mathbb C}$  
with the property that for every net 
$\{f_i\}_{i\in I}$ in $\UCb(X)$ with 
$\sup\limits_{i\in I}\Vert f_i\Vert_\infty<\infty$ and 
$\lim\limits_{i\in I}f_i=0$ 
uniformly on every compact subset of~$X$ 
one has $\lim\limits_{i\in I}\varphi(f_i)=0$. 
We also denote 
$\Mt^{+}(X):=\Mt(X)\cap\Mc^{+}(X)$. 

\begin{lemma}\label{class6.5}
Let $X$ be a uniform space and $\varphi\colon\UCb(X)\to{\mathbb C}$ 
a self-adjoint linear functional with its positive part and negative part~$\varphi^{\pm}$. 
Then $\varphi\in \Mt(X)$  
if and only if $\varphi^{\pm}\Mt(X)$. 
\end{lemma}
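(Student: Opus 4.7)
The defining condition of $\Mt(X)$ is clearly preserved under linear combinations, so $\Mt(X)$ is a linear subspace of $\Mc(X)$. This gives the ``if'' direction at once, since $\varphi=\varphi^+-\varphi^-$. For the converse, assume $\varphi\in\Mt(X)$ is self-adjoint; it is enough to show $\varphi^+\in\Mt(X)$, since then $\varphi^-=\varphi^+-\varphi\in\Mt(X)$ by the linearity just noted.

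The essential ingredient is the Jordan-decomposition formula
\begin{equation*}
\varphi^+(f)=\sup\{\varphi(g):g\in\UCb(X,\RR),\ 0\le g\le f\},\qquad f\in\UCb(X,\RR),\ f\ge 0,
\end{equation*}
which I would establish by identifying $\UCb(X)\cong C(K)$ via Gelfand duality (with $K$ the maximal ideal space of the commutative unital $C^*$-algebra $\UCb(X)$), representing $\varphi$ as a finite signed regular Borel measure $\mu$ on $K$, and approximating the characteristic function of the Hahn-positive set in $L^1(|\mu|)$ by continuous functions trapped between $0$ and $f$.

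Now let $\{f_i\}_{i\in I}$ be a uniformly bounded net in $\UCb(X)$ with $f_i\to 0$ uniformly on each compact subset of $X$. Writing $f_i=u_i+\ie v_i$ with $u_i,v_i\in\UCb(X,\RR)$, and then $u_i=u_i^+-u_i^-$ with $u_i^\pm=\max(\pm u_i,0)$ (and similarly for $v_i$), the fact that $t\mapsto t^\pm$ is $1$-Lipschitz on $\RR$ shows that each of the four pieces is uniformly continuous, uniformly bounded, and still converges to $0$ uniformly on compacta. By $\CC$-linearity of $\varphi^+$ we may therefore assume $f_i\ge 0$. Fix $\varepsilon>0$; for each $i$ use the sup formula above to pick $g_i\in\UCb(X,\RR)$ with $0\le g_i\le f_i$ and $\varphi(g_i)\ge\varphi^+(f_i)-\varepsilon$. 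The net $\{g_i\}$ inherits uniform boundedness and compact-uniform convergence to $0$ from $\{f_i\}$, so tightness of $\varphi$ yields $\varphi(g_i)\to 0$, whence $\limsup_i\varphi^+(f_i)\le\varepsilon$. Letting $\varepsilon\downarrow 0$ and noting $\varphi^+\ge 0$ gives $\varphi^+(f_i)\to 0$, as desired.

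The main obstacle I anticipate is the sup formula for $\varphi^+$: one must be careful that the Hahn--Jordan approximants can be chosen \emph{inside} the algebra $\UCb(X)$ rather than merely in $L^\infty(|\mu|)$. This is a standard $C^*$-algebraic fact (a consequence of the regularity of $\mu$ on the Gelfand spectrum), but it needs to be invoked explicitly; once in hand, the rest is a routine reduction to the positive case and a single $\varepsilon$-argument that relies only on the tightness hypothesis on~$\varphi$.
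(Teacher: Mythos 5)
Your argument is correct, but it is genuinely different from what the paper does: the paper offers no proof at all for Lemma~\ref{class6.5}, simply citing F\"edorova's work \cite{Feo67} on linear functionals on spaces of uniformly continuous functions, whereas you give a self-contained argument. The easy direction is, as you say, just linearity of the defining condition of $\Mt(X)$. For the substantive direction your key tool is the Riesz--Kantorovich/Hahn--Jordan formula $\varphi^+(f)=\sup\{\varphi(g): g\in\UCb(X,\RR),\ 0\le g\le f\}$ for $f\ge 0$, and your selection argument is sound: the chosen $g_i$ satisfy $0\le g_i\le f_i$, hence inherit both the uniform bound and uniform convergence to $0$ on compacta, so tightness of $\varphi$ forces $\limsup_i\varphi^+(f_i)\le\varepsilon$, and positivity of $\varphi^+$ on the nonnegative $f_i$ finishes it; the reduction to $f_i\ge0$ via real/imaginary and positive/negative parts is legitimate because $t\mapsto t^{\pm}$ is $1$-Lipschitz. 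Two small remarks that would tighten the write-up: (i) you can bypass the Gelfand/regularity detour entirely, since $\UCb(X,\RR)$ is a vector lattice (pointwise max and min of uniformly continuous functions are uniformly continuous), so the sup formula is available directly as the Riesz--Kantorovich construction of the positive part, which agrees with the canonical Jordan decomposition by uniqueness of the minimal decomposition; if you keep the Gelfand route (via the Samuel compactification $K$), the regularity argument you sketch does work, but it is exactly the point you should spell out. (ii) It is worth noting that membership in $\Mt(X)$ already forces norm-boundedness of $\varphi$ (otherwise rescaling a norm-bounded sequence with $|\varphi(f_n)|\ge n$ by $n^{-1/2}$ contradicts tightness), so the Jordan decomposition is indeed applicable. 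What your approach buys is an elementary, reference-free proof; what the paper's citation buys is brevity and an appeal to a result stated precisely in the uniform-space setting.
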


\begin{proof}
See \cite{Feo67}. 
\end{proof}

\begin{proposition}\label{gen5} 
The following assertions hold: 
\begin{enumerate}
\item\label{gen5_item1} 
Let $\mu\in\Mc^+(X)$. 
We have $\Fc\mu\in\UCb(X^\nabla)$ if and only if $\Re(\Fc\mu)$ is continuous at $0\in X^\nabla$. 
\item\label{gen5_item2} 
If $\mu\in\Mt(X)$, then $\Fc\mu\in\UCb(X^\nabla)$. 
\end{enumerate}
\end{proposition}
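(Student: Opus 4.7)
The plan is to prove part~\eqref{gen5_item1} first, and then bootstrap \eqref{gen5_item2} from it using Lemmas~\ref{class6.5} and~\ref{map5}.

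For \eqref{gen5_item1}, the direction $\Fc\mu\in\UCb(X^\nabla)\Rightarrow\Re(\Fc\mu)$ continuous at~$0$ is immediate. For the converse, the key input is Lemma~\ref{gen4}\eqref{gen4_item2}, which I would combine with two observations: first, $\langle\mu,\1\rangle=\Re(\Fc\mu)(0)$ since $\mu\ge0$; second, the pointwise-convergence uniform structure on $X^\nabla$ is translation invariant, because it is generated by the linear evaluation maps $f\mapsto f(x)$, so the subtraction $X^\nabla\times X^\nabla\to X^\nabla$ is uniformly continuous. Hence if $\Re(\Fc\mu)$ is continuous at~$0$, the right-hand side of
$$
|(\Fc\mu)(f)-(\Fc\mu)(h)|^{2}\le 2\langle\mu,\1\rangle\bigl(\langle\mu,\1\rangle-\Re(\Fc\mu)(f-h)\bigr)
$$
is controlled uniformly in $(f,h)$ whenever $f-h$ lies in a prescribed small neighborhood of $0$, which gives the uniform continuity of $\Fc\mu$; boundedness comes from Lemma~\ref{gen4}\eqref{gen4_item1}.

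For \eqref{gen5_item2}, I would first reduce to the positive case: decompose $\mu\in\Mt(X)$ into its real and imaginary parts and each of those into positive and negative parts via the Jordan decomposition, and use Lemma~\ref{class6.5} to ensure every piece lies in $\Mt^+(X)$. By linearity of $\Fc$ it suffices to treat $\mu\in\Mt^+(X)$. Part~\eqref{gen5_item1} then reduces the task to showing that $\Re(\Fc\mu)(h)=\langle\mu,\cos h\rangle$ is continuous at $h=0$, i.e.\ that $\langle\mu,1-\cos h_i\rangle\to 0$ for every net $h_i\to 0$ in~$X^\nabla$. The net $g_i:=1-\cos h_i$ lies in $\UCb(X)$, is uniformly bounded by $2$, and converges pointwise to $0$ on~$X$.

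The main obstacle is a mismatch of topologies: the tightness hypothesis only delivers $\langle\mu,g_i\rangle\to 0$ for bounded nets $g_i$ tending to $0$ \emph{uniformly on compact subsets} of~$X$, whereas we only have pointwise convergence of $g_i$ a priori. To bridge this gap I would invoke Lemma~\ref{map5}: it is enough to verify continuity of $\Re(\Fc\mu)$ along equi-continuous nets $(h_i)$ in $X^\nabla$, since on any equi-continuous family the pointwise-convergence structure coincides with that of uniform convergence on precompact subsets of $X$. Because $g_i=1-\cos h_i$ inherits equi-continuity from $(h_i)$ through the Lipschitz function $1-\cos$, applying the tightness of $\mu$ then yields $\langle\mu,g_i\rangle\to 0$, which is the needed continuity at~$0$; part~\eqref{gen5_item1} concludes the proof.
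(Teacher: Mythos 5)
Your part~\eqref{gen5_item1} is correct and is exactly what the paper intends by ``follows at once by Lemma~\ref{gen4}'': translation invariance of the pointwise uniform structure on $X^\nabla$, the identity $\Re(\Fc\mu)(0)=\langle\mu,\1\rangle$, and Lemma~\ref{gen4}\eqref{gen4_item2} upgrade continuity at $0$ to uniform continuity, with boundedness from Lemma~\ref{gen4}\eqref{gen4_item1}. The reduction of \eqref{gen5_item2} to $\mu\in\Mt^{+}(X)$ via Lemma~\ref{class6.5} also matches the paper.

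The gap is your pivotal reduction ``it is enough to verify continuity of $\Re(\Fc\mu)$ along equi-continuous nets $(h_i)$''. No justification is offered, and none can be: the topology of $X^\nabla$ is that of pointwise convergence on $X$, so continuity at $0$ must be tested on arbitrary nets $h_i\to 0$ pointwise, and such nets are in general neither equicontinuous nor controlled by equicontinuous ones. The restriction is not harmless. Take $X=[0,1]$ with its usual uniformity and $\mu$ the Lebesgue measure, which satisfies the tightness condition defining $\Mt(X)$ since $X$ is compact. For every finite set $F\subset[0,1]$ choose $h_F\in\UC(X,\RR)$ with $0\le h_F\le\pi$, $h_F=0$ on $F$, and $h_F=\pi$ outside neighbourhoods of $F$ of total length $<1/10$; the net $(h_F)_F$, directed by inclusion of $F$, tends to $0$ in $X^\nabla$, yet $\Re(\Fc\mu)(h_F)=\int_0^1\cos h_F\,\de x\le -4/5$, while along every \emph{equicontinuous} net tending to $0$ your argument (Lemma~\ref{map5} plus tightness) does give convergence to $1$. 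So checking only equicontinuous nets proves a genuinely weaker property --- continuity (indeed uniform continuity) of $\Fc\mu$ on equicontinuous subsets of $X^\nabla$ --- and does not yield continuity at $0\in X^\nabla$ for the pointwise structure. For comparison, the paper's own proof passes from ``$f_j\to0$ pointwise'' to ``$\cos f_j\to 1$ uniformly on compacta'' by citing Lemma~\ref{map5}, whose uniform equicontinuity hypothesis is likewise not verified there; you have therefore isolated exactly the step carrying the real content, but your proposed bridge (shrinking the class of test nets) does not close it, and the example above shows it cannot be closed without either an equicontinuity hypothesis or a reinterpretation of the uniform structure placed on $X^\nabla$.
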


\begin{proof}
Assertion~\eqref{gen5_item1} follows at once by Lemma~\ref{gen4}. 

For proving Assertion~\eqref{gen5_item2}, we see from Lemma~\ref{class6.5} that we may assume $\mu\in\Mt^+(X)$.  
Then, according to Assertion~\eqref{gen5_item1}, it suffices to show that 
for every $\mu\in\Mt^+(X)$ the function $\Re(\Fc\mu)\colon X^\nabla\to\CC$ is continuous at $0\in X^\nabla$. 
To this end, let us assume that $\lim\limits_{j\in J}f_j=0$ in $X^\nabla$. 
In other words, $\{f_j\}_{j\in J}$ is a net of uniformly continuous real functions on $X$ 
with $\lim\limits_{j\in J}f_j=0$ pointwise on $X$. 
By using Lemma~\ref{map5}, we see that $\{\cos f_j\}_{j\in J}$ is 
a uniformly bounded net in $\UCb(X)$ which converges to $1\in \UCb(X)$ uniformly on the compact subsets of $X$. 
Since $\mu\in\Mt(X)$, we then get 
$$\lim\limits_{j\in J}\Re(\Fc\mu)(f_j)
=\lim\limits_{j\in J}\Re\langle\mu,\ee^{\ie f_j}\rangle
=\lim\limits_{j\in J}\langle\mu,\cos f_j\rangle
=1. $$
Therefore the function $\Re(\Fc\mu)$ is continuous at $0\in X^\nabla$, and this completes the proof. 
\end{proof}

\begin{remark}
\normalfont
Lemma~\ref{gen4} and Proposition~\ref{gen5} are straightforward extensions of 
some results from~\cite[\S 6, no.~8]{Bo69}. 
\end{remark}

\section{Operator calculus on topological groups}\label{Sect3}

In this section we introduce an operator calculus for unitary representations of topological groups, 
since it will allow us to handle in Section~\ref{Sect5} some representations of infinite-dimensional Heisenberg groups,  
which are Lie groups modeled on Hilbert spaces. 
In the case of Banach-Lie groups,  
the space of continuous 1-parameter subgroups and exponential map, 
as defined below,  
agree with the usual notions of Lie algebra and exponential map from the Lie theoretic setting 
(see \cite{Nee06} for extensive information in this connection). 
We also note that in the case of finite-dimensional nilpotent Lie groups 
the present operator calculus recovers the Weyl calculus used in \cite{GH14} in the study of L\'evy processes.

Let $G$ be a topological group endowed with the right uniform structure. 
We recall that a basis of this uniform structure is provided by the sets 
$$\quad S^\lambda_V
=\{(x,y)\in G\times G\mid yx^{-1}\in V\},$$ 
where $V\in\Vc_G(\1)$. 
Consider the corresponding space of mappings 
$$\Lg(G)=\{X\colon{\mathbb R}\to G\mid X\text{ homomorphism of topological groups}\}$$ 
with the structure of uniform convergence on the compact subsets of $\RR$. 
Hence a basis of this uniform structure consists of the entourages 
$$\quad S^\lambda_{n,V}=\{(X,Y)\in\Lg(G)\times\Lg(G)\mid
(\forall t\in[-n,n])\quad 
Y(t)X(t)^{-1}\in V\} $$
parameterized by $V\in\Vc_G(\1)$ and $n\in\NN$ 
(see \cite[Def. 2.6]{HM07}). 

It is easily seen that the exponential mapping 
$$\exp_G\colon\Lg(G)\to G,\quad \exp_G X:=X(1)$$
is uniformly continuous, hence it gives rise to 
a unital $*$-homomorphism of $C^*$-algebras 
$$\UCb(\exp_G)\colon\LUCb(G)\to\UCb(\Lg(G)),\quad f\mapsto f\circ\exp_G$$
with the dual map 
$$\UCb'(\exp_G)\colon(\UCb(\Lg(G)))'\to(\LUCb(G))'. $$
We recall from \cite{Gr} 
(see also \cite[Th. 3.9]{B11}) 
that $(\LUCb(G))'$ is a unital associative Banach algebra in a natural way and every continuous unitary representation 
$\pi\colon G\to\Bc(\Hc_\pi)$ gives rise to a Banach algebra representation 
$$\widehat{\pi}_{\Lc}\colon(\LUCb(G))'\to\Bc(\Hc)$$
such that 
$(\widehat{\pi}_{\Lc}(\nu)\phi\mid\psi)
=\langle\nu,(\pi(\cdot)\phi\mid\psi)\rangle$ 
for $\phi,\psi\in\Hc$ and $\nu\in(\LUCb(G))'$. 
Therefore we get a bounded linear operator 
$$\widetilde{\pi}_{\Lc}\colon(\UCb(\Lg(G)))'\to\Bc(\Hc)$$
such that the diagram 
$$
\xymatrix{
(\LUCb(G))' \ar[r]^{\widehat{\pi}_{\Lc}} & \Bc(\Hc) \\
(\UCb(\Lg(G)))' \ar[u]^{\UCb'(\exp_G)} \ar[ur]_{\widetilde{\pi}_{\Lc}}& 
}
$$
is commutative. 

\begin{setting}\label{opcalc0}
\normalfont
Throughout the rest of this section we fix a continuous unitary representation $\pi\colon G\to\Bc(\Hc)$ 
of the above topological group $G$ 
on the complex Hilbert space $\Hc$ 
and we assume the setting defined by the following data: 
\begin{itemize} 
\item a uniform space $\Xi$ 
and a uniformly continuous map $\theta\colon\Xi\to\Lg(G)$, 
\item a locally convex space $\Gamma$ such that 
there exists an injective continuous inclusion map 
$\Gamma\hookrightarrow\Mc(\Lg(G)^\nabla)$, 
\item a locally convex space $\Hc_{\Xi,\infty}$ 
such that there exists an injective continuous inclusion map 
$\Hc_{\Xi,\infty}\hookrightarrow\Hc$.  
\end{itemize}
Also let $\eta_{\Lg(G)}\colon\Lg(G)\to(\Lg(G)^\nabla)^\nabla$ be the uniformly continuous map defined in Remark~\ref{gen2}. 
\end{setting}

\begin{definition}\label{opcalc0.5}
\normalfont
We say that $\Gamma$ and $\theta$ are \emph{compatible} if 
the linear mapping  
$$\Fc_\Xi\colon\Gamma\to\UCb(\Xi),\quad 
\mu\mapsto\widehat{\mu}\circ\eta_{\Lg(G)}\circ\theta $$
is well defined and injective. 

If this is the case, then we denote $\Qc_\Xi:=\Fc_\Xi(\Gamma)\hookrightarrow\UCb(\Xi)$ 
and endow it with the topology which makes the \emph{Fourier transform} 
$$\Fc_\Xi\colon\Gamma\to\Qc_\Xi $$
into a linear toplogical isomorphism. 
We then also have the linear toplogical isomorphism 
$(\Fc_\Xi')^{-1}\colon\Gamma'\to\Qc_\Xi'$.
\end{definition}

\begin{lemma}\label{opcalc0.6}
If\ \ $\Gamma$ and $\theta$ are compatible, then the following conditions are equivalent: 
\begin{enumerate}
\item\label{opcalc0.6_item1} 
We have the well-defined continuous sesquilinear mapping 
$$\Ac^{\pi,\theta}\colon\Hc_{\Xi,\infty}\times\Hc_{\Xi,\infty}\to\Qc_\Xi,\quad 
(\phi,\psi)\mapsto\Ac^{\pi,\theta}_\psi\phi
:=(\pi(\exp_G(\theta(\cdot)))\phi\mid\psi).$$
\item\label{opcalc0.6_item2} 
There exists a unique continuous sesquilinear mapping
\begin{equation*}
\Wig\colon\Hc_{\Xi,\infty}\times\Hc_{\Xi,\infty}\to\Gamma,
\end{equation*}
such that for all $\phi,\psi\in\Hc_{\Xi,\infty}$ we have  
$\Fc_\Xi(\Wig(\phi,\psi))=(\pi(\exp_G(\theta(\cdot)))\phi\mid\psi)$. 
\end{enumerate}
\end{lemma}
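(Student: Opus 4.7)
The plan is to exploit the fact that, by the very definition of $\Qc_\Xi$ in Definition~\ref{opcalc0.5}, the Fourier transform $\Fc_\Xi\colon\Gamma\to\Qc_\Xi$ is a linear topological isomorphism. Consequently the two conditions of the lemma will be related by composing with $\Fc_\Xi$ or with its inverse, and the equivalence becomes a formal check rather than something requiring additional analytic input.

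For the implication \eqref{opcalc0.6_item1}$\Rightarrow$\eqref{opcalc0.6_item2}, I would simply set
\[
\Wig := \Fc_\Xi^{-1}\circ\Ac^{\pi,\theta}\colon\Hc_{\Xi,\infty}\times\Hc_{\Xi,\infty}\to\Gamma.
\]
Sesquilinearity is inherited from $\Ac^{\pi,\theta}$, and continuity is the composition of the continuous sesquilinear map $\Ac^{\pi,\theta}$ with the continuous linear map $\Fc_\Xi^{-1}$. The identity $\Fc_\Xi(\Wig(\phi,\psi))=\Ac^{\pi,\theta}_\psi\phi=(\pi(\exp_G(\theta(\cdot)))\phi\mid\psi)$ holds by construction. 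Uniqueness of such a $\Wig$ follows from the injectivity of $\Fc_\Xi$ on $\Gamma$, which is part of the compatibility hypothesis.

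For the converse \eqref{opcalc0.6_item2}$\Rightarrow$\eqref{opcalc0.6_item1}, I would put
\[
\Ac^{\pi,\theta} := \Fc_\Xi\circ\Wig\colon\Hc_{\Xi,\infty}\times\Hc_{\Xi,\infty}\to\Qc_\Xi,
\]
which is continuous and sesquilinear as a composition of continuous (sesqui)linear maps, and the explicit formula for $\Ac^{\pi,\theta}_\psi\phi$ is precisely the requirement imposed on $\Wig$ in \eqref{opcalc0.6_item2}.

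There is no genuine obstacle here: both directions are a transport-of-structure argument along the topological isomorphism $\Fc_\Xi\colon\Gamma\to\Qc_\Xi$, and all the real work (well-definedness of $\Fc_\Xi$, injectivity, and the fact that $\Qc_\Xi$ sits inside $\UCb(\Xi)$) has already been absorbed into the compatibility assumption of Definition~\ref{opcalc0.5}. The only point worth being careful about is merely notational, namely to verify that the expression $(\pi(\exp_G(\theta(\cdot)))\phi\mid\psi)$ appearing in \eqref{opcalc0.6_item1} is \emph{the same} function as $\Fc_\Xi(\Wig(\phi,\psi))$ in \eqref{opcalc0.6_item2}, which is built into the statement of the equivalence.
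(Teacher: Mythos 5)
Your proposal is correct and follows essentially the same route as the paper's own proof: both directions are obtained by transporting the structure along the linear topological isomorphism $\Fc_\Xi\colon\Gamma\to\Qc_\Xi$, setting $\Wig=\Fc_\Xi^{-1}\circ\Ac^{\pi,\theta}$ in one direction and $\Ac^{\pi,\theta}=\Fc_\Xi\circ\Wig$ in the other, with uniqueness coming from injectivity of $\Fc_\Xi$.
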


\begin{proof}
If condition \eqref{opcalc0.6_item2} is satisfied, then 
$\Ac^{\pi,\theta}=\Fc_\Xi\circ\Wig$, hence condition~\eqref{opcalc0.6_item1} 
follows since $\Fc_\Xi\colon\Gamma\to\Qc_\Xi $ is a linear topological isomorphism. 
For the same reason, it also follows that if condition~\eqref{opcalc0.6_item1} 
holds true, then \eqref{opcalc0.6_item2} is satisfied. 
\end{proof}

\begin{definition}\label{opcalc1}
\normalfont
Assume that $\Gamma$ and $\theta$ are compatible and the equivalent conditions in Lemma~\ref{opcalc0.6} are satisfied. 
Then the sesquilinear map $\Wig$ is called the \emph{Wigner transform}.
The \emph{operator calculus for $\pi$ along~$\theta$} 
is the linear map 
$$\Op^\theta\colon\Gamma'\to\Lc(\Hc_{\Xi,\infty},\overline{\Hc}_{\Xi,\infty}')$$ 
defined by 
\begin{equation}\label{opcalc_eq1}
(\Op^\theta(a)\phi\mid\psi)
=\langle\underbrace{(\Fc_\Xi')^{-1}(a)}_{\hskip20pt\in\Qc_\Xi'},
\underbrace{(\pi(\exp_G(\theta(\cdot)))\phi\mid\psi)}_{\hskip20pt\in\Qc_\Xi}\rangle
\end{equation}
for $a\in\Gamma'$ and $\phi,\psi\in\Hc_{\Xi,\infty}$, 
where $\overline{\Hc}_{\Xi,\infty}'$ denotes the space of continuous antilinear functionals on ${\Hc}_{\Xi,\infty}$. 
\end{definition}

\begin{remark}\label{opcalc3}
\normalfont
In the setting of Definition~\ref{opcalc1} we have 
for all $a\in\Gamma'$ and $\phi,\psi\in\Hc_{\Xi,\infty}$  
$$(\Op^\theta(a)\phi\mid\psi)
=\langle(\Fc_\Xi')^{-1}(a),\Ac^{\pi,\theta}_\psi\phi\rangle
=\langle a,\Fc_\Xi^{-1}(\Ac^{\pi,\theta}_\psi\phi)\rangle
=\langle a,\Wig(\phi,\psi)\rangle,$$
where the later equality follows by 
Lemma~\ref{opcalc0.6}\eqref{opcalc0.6_item2}.
\end{remark}

\begin{definition}\label{opcalc4}
\normalfont
Assume the setting of Definition~\ref{opcalc1}. 
We say that the representation $\pi$ satisfies 
the \emph{orthogonality relations}\index{orthogonality relations} 
along the mapping $\theta\colon\Xi\to\Lg(G)$ 
if the following conditions are satisfied: 
\begin{enumerate}
\item 
The linear subspace $\Hc_{\infty,\Xi}$ is dense in $\Hc$. 
\item 
There exists a continuous, positive definite, sesquilinear, inner product on $\Gamma$ such that 
if we denote by $\Gamma_2$ the corresponding Hilbert space obtained by completion, 
then the sesquilinear mapping $\Wig\colon\Hc_{\Xi,\infty}\times\Hc_{\Xi,\infty}\to\Gamma$ 
extends to a unitary operator 
\begin{equation}\label{opcalc4_eq1}
\Wig\colon\Hc\botimes\overline{\Hc}\to\Gamma_2
\end{equation}  
which is still called the \emph{Wigner transform}. 
\end{enumerate}
\end{definition}

\begin{remark}\label{opcalc5}
\normalfont
In Definition~\ref{opcalc4}, since the inner product on $\Gamma$ is continuous, 
it gives rise to a continuous injective map $\overline{\Gamma}_2\hookrightarrow\Gamma'$. 
By using the fact that the operator~\eqref{opcalc4_eq1} is an isometry, 
we easily get 
$$(\forall\phi,\psi\in\Hc)\quad 
\Op^\theta(\Wig(\phi,\psi))=(\cdot\mid\psi)\phi. $$
\end{remark}

\section{Some computations involving Gaussian measures}\label{Sect4}

This section records some auxiliary facts that will be needed in the proof of Theorem~\ref{orth_th}. 

\begin{notation}\label{ind0}
\normalfont 
We shall use the following notation: 
\begin{enumerate}
\item For $x=(x_1,\dots,x_m)\in\RR^m$ we set  
$x'=(x_1,\dots,x_{m-1})\in\RR^{m-1}$ 
if $m\ge2$, 
hence 
$$x=(x',x_m)\in\RR^{m-1}\times\RR.$$ 
\item For every $t>0$ we denote 
$$(\forall x\in\RR)\quad 
\gamma_{t}(x)=\Bigl(\frac{1}{2\pi t}\Bigr)^{1/2}
\,\ee^{-\frac{x^2}{2t}},$$
so that $\gamma_{t}(x)\de x$ is the centered Gaussian probability measure on $\RR$ with \emph{variance} $t$ 
(and mean~$0$). 
\end{enumerate}
\end{notation}

\begin{remark}\label{ind1}
\normalfont
We recall that 
\begin{equation}\label{ind1_eq1}
\int\limits_{\RR}\gamma_{t}(x)\ee^{\ie vx}\de x
=\Bigl(\frac{2\pi}{t}\Bigr)^{1/2}\gamma_{1/t}(v)
=\ee^{-\frac{tv^2}{2}}
\end{equation}
for all $v\in\RR$ and $t>0$. 
\end{remark}

\begin{remark}\label{ind2}
\normalfont
For any integer $m\ge1$ recall the unitary operator that gives the integral kernels of operators obtained 
by classical Weyl calculus with $L^2$-symbols
$$T_m\colon L^2(\RR^m\times(\RR^m)')\to L^2(\RR^m\times\RR^m)$$
defined by 
$$(T_m(a))(x,y)=
\frac{1}{(2\pi)^{m/2}}\int\limits_{\RR^m} 
a(\frac{x+y}{2},\xi)\ee^{\ie\langle x-y,\xi\rangle}\de\xi,$$
with its inverse 
$$T_m^{-1}\colon L^2(\RR^m\times\RR^m)\to L^2(\RR^m\times(\RR^m)') $$
given by 
\begin{equation}\label{ind2_eq1}
(T_m^{-1}(K))(x,\xi)=\frac{1}{(2\pi)^{m/2}}\int\limits_{\RR^m} 
K(x+\frac{v}{2},x-\frac{v}{2})\ee^{-\ie\langle v,\xi\rangle}\de v, 
\end{equation}
for every $m\ge 1$. 

Then for arbitrary $m_1,m_2\ge 1$ there exist the natural unitary operators 
$$V\colon L^2(\RR^{m_1}\times\RR^{m_1})\botimes 
L^2(\RR^{m_2}\times\RR^{m_2}) \to 
L^2(\RR^{m_1+m_2}\times\RR^{m_1+m_2}) $$
given by 
$$(V(f_1\otimes f_2))((x_1,x_2),(y_1,y_2))=f_1(x_1,y_1)f_2(x_2,y_2)$$ 
for $x_j,y_j\in\RR^{m_j}$, $f_j\in L^2(\RR^{m_j}\times\RR^{m_j})$, $j=1,2$, 
and 
$$W\colon L^2(\RR^{m_1}\times(\RR^{m_1})')\botimes 
L^2(\RR^{m_2}\times(\RR^{m_2})') \to 
L^2(\RR^{m_1+m_2}\times(\RR^{m_1+m_2})')  $$
given by 
$$(W(g_1\otimes g_2))((x_1,x_2),(\xi_1,\xi_2))=g_1(x_1,\xi_1)g_2(x_2,\xi_2)$$ 
for $x_j\in\RR^{m_j}$, $\xi_j\in L^2((\RR^{m_j})')$, 
$f_j\in L^2(\RR^{m_j}\times(\RR^{m_j})')$, $j=1,2$,
and the diagram 
$$\xymatrix{ L^2(\RR^{m_1}\times(\RR^{m_1})')\botimes 
L^2(\RR^{m_2}\times(\RR^{m_2})') 
\ar[d]_{T_{m_1}\otimes T_{m_2}} \ar[r]^{\hskip25pt W}
 & L^2(\RR^{m_1+m_2}\times(\RR^{m_1+m_2})') \ar[d]^{T_{m_1+m_2}}
\\
L^2(\RR^{m_1}\times\RR^{m_1})\botimes 
L^2(\RR^{m_2}\times\RR^{m_2}) \ar[r]^{\hskip25pt V}
 & L^2(\RR^{m_1+m_2}\times\RR^{m_1+m_2}) 
}$$
is commutative. 
\end{remark}

The following simple lemma plays a key role in the present paper since it allows us 
to determine integral kernels of operators on Gaussian $L^2$-spaces in terms of integral kernels on Lebesgue $L^2$-spaces. 

\begin{lemma}\label{ind2.5}
The unitary operator $T_1\colon L^2(\RR^2)\to L^2(\RR^2)$ 
defined by 
$$(\forall a\in L^2(\RR^2))\quad (T_1(a))(x,y)=
\frac{1}{(2\pi)^{1/2}}\int\limits_{\RR} 
a(\frac{x+y}{2},\xi)\ee^{\ie(x-y)\xi}\de\xi$$ 
has the property 
$$(\forall t>0)\quad T_1^{-1}((\gamma_{t}\otimes\gamma_{t})^{1/2})
=(\gamma_{t/2}\otimes\gamma_{1/8t})^{1/2}.$$
\end{lemma}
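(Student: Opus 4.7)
The plan is to prove the identity by direct calculation using formula~\eqref{ind2_eq1} for $T_1^{-1}$ together with the Gaussian Fourier transform formula~\eqref{ind1_eq1}.

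First I would write the integrand explicitly. Since $\gamma_{t}(x)^{1/2}=(1/(2\pi t))^{1/4}\ee^{-x^2/(4t)}$, we have
$$(\gamma_{t}\otimes\gamma_{t})^{1/2}(u,w)=\Bigl(\frac{1}{2\pi t}\Bigr)^{1/2}\ee^{-(u^2+w^2)/(4t)}.$$
Substituting $u=x+v/2$ and $w=x-v/2$, the key algebraic identity $u^2+w^2=2x^2+v^2/2$ separates the exponent as
$$(\gamma_{t}\otimes\gamma_{t})^{1/2}\bigl(x+\tfrac{v}{2},x-\tfrac{v}{2}\bigr)
=\Bigl(\frac{1}{2\pi t}\Bigr)^{1/2}\ee^{-x^2/(2t)}\ee^{-v^2/(8t)}.$$

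Next, by \eqref{ind2_eq1}, the $x$-factor comes out of the integral and I would be left with
$$\bigl(T_1^{-1}((\gamma_{t}\otimes\gamma_{t})^{1/2})\bigr)(x,\xi)
=\frac{1}{(2\pi)^{1/2}}\Bigl(\frac{1}{2\pi t}\Bigr)^{1/2}\ee^{-x^2/(2t)}
\int_{\RR}\ee^{-v^2/(8t)}\ee^{-\ie v\xi}\de v.$$
Recognizing $\ee^{-v^2/(8t)}$ as a multiple of $\gamma_{4t}(v)$, I would evaluate the remaining integral via \eqref{ind1_eq1} (using that the Gaussian is even, so replacing $\ie$ by $-\ie$ is harmless), obtaining $(8\pi t)^{1/2}\ee^{-2t\xi^2}$. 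Collecting the constants then yields
$$\bigl(T_1^{-1}((\gamma_{t}\otimes\gamma_{t})^{1/2})\bigr)(x,\xi)
=\Bigl(\frac{2}{\pi}\Bigr)^{1/2}\ee^{-x^2/(2t)}\ee^{-2t\xi^2}.$$

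Finally, I would expand the right-hand side of the claimed identity in the same manner:
$$\gamma_{t/2}(x)^{1/2}=\Bigl(\frac{1}{\pi t}\Bigr)^{1/4}\ee^{-x^2/(2t)},\qquad
\gamma_{1/8t}(\xi)^{1/2}=\Bigl(\frac{4t}{\pi}\Bigr)^{1/4}\ee^{-2t\xi^2},$$
whose product has normalization constant $(4/\pi^2)^{1/4}=(2/\pi)^{1/2}$, matching the computation above. No real obstacle is expected; the only care needed is bookkeeping the normalization constants, which is why I would track the prefactors $(2\pi t)^{-1/2}$, $(2\pi)^{-1/2}$, and $(8\pi t)^{1/2}$ separately and simplify them at the end.
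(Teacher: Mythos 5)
Your proposal is correct and follows essentially the same route as the paper's proof: apply the inversion formula \eqref{ind2_eq1}, separate the exponent via $(x+\tfrac{v}{2})^2+(x-\tfrac{v}{2})^2=2x^2+\tfrac{v^2}{2}$, evaluate the remaining Gaussian Fourier integral by \eqref{ind1_eq1}, and match constants with $(\gamma_{t/2}\otimes\gamma_{1/8t})^{1/2}$. All prefactors check out, so nothing further is needed.
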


\begin{proof}
Let us denote $K=(\gamma_{t}\otimes\gamma_{t})^{1/2}$. 
Then we have 
$$K(x,y)=\Bigl(\frac{1}{2\pi t}\Bigr)^{1/2}
\,\ee^{-\frac{x^2+y^2}{4t}} $$
hence, using \eqref{ind2_eq1}, one obtains 
\allowdisplaybreaks
\begin{align}
(T_1^{-1}(K))(x,\xi)
&=\frac{1}{2\pi t^{1/2}}\int\limits_{\RR} \ee^{-\frac{1}{4t}\bigl((x+\frac{v}{2})^2+(x-\frac{v}{2})^2\bigr)} 
\ee^{-\ie v\xi}\de v \nonumber\\
&=\frac{1}{2\pi t^{1/2}}\, \ee^{-\frac{x^2}{2t}} 
\int\limits_{\RR} \ee^{-\frac{v^2}{8t}} 
\ee^{-\ie v\xi}\de v \nonumber\\
&=\frac{1}{2\pi t^{1/2}}\, \ee^{-\frac{x^2}{2t}} (8\pi t)^{1/2}
\int\limits_{\RR} \gamma_{4t}(v)
\ee^{\ie v\xi}\de v \nonumber\\
&=\Bigl(\frac{2}{\pi}\Bigr)^{1/2} \ee^{-\frac{x^2}{2t}} \ee^{-2t \xi^2} \nonumber\\
&=\Bigl(\Bigl(\frac{1}{\pi t}\Bigr)^{1/2} \ee^{-\frac{x^2}{t}} 
\Bigl(\frac{4t}{\pi}\Bigr)^{1/2}\ee^{-4t \xi^2}\Bigr)^{1/2} \nonumber\\
&=(\gamma_{t/2}(x)\gamma_{1/8t}(\xi))^{1/2} \nonumber
\end{align}
where the third from last equality follows by \eqref{ind1_eq1}, 
and this completes the proof. 
\end{proof}

\begin{proposition}\label{ind3}
Let $t_1\ge t_2\ge\cdots>0$ and  
for $m=1,2,\dots$ define
$$\widetilde{\gamma}_m\colon\RR^m\to\RR,\quad 
\widetilde{\gamma}_m(x_1,\dots,x_m):=\gamma_{t_1}(x_1)\cdots\gamma_{t_m}(x_m) $$
and $\de\widetilde{\gamma}_m:=\widetilde{\gamma}_m(x)\de x$. 
Then for $m\ge 2$ the diagram 
$$\xymatrix{
L^2(\RR^{m-1}\times(\RR^{m-1})') \ar[d]_{S_{m-1}} \ar[r]^{\hskip15pt\beta}
 & L^2(\RR^{m}\times(\RR^{m})') \ar[d]^{S_{m}}\\
L^2(\RR^{m-1}\times(\RR^{m-1})') \ar[d]_{T_{m-1}} \ar[r]^{\hskip15pt\alpha}
 & L^2(\RR^{m}\times(\RR^{m})') \ar[d]^{T_{m}}\\
L^2(\RR^{m-1}\times\RR^{m-1}) \ar[d]_{U_{m-1}} \ar[r]^{\hskip15pt\eta}
 & L^2(\RR^{m}\times\RR^{m}) \ar[d]^{U_{m}} \\
L^2(\RR^{m-1}\times\RR^{m-1},\de\widetilde{\gamma}_{m-1}\times\de\widetilde{\gamma}_{m-1}) \ar[r]^{\hskip15pt\iota}
 & L^2(\RR^{m}\times\RR^{m},\de\widetilde{\gamma}_{m}\times\de\widetilde{\gamma}_{m}) 
} $$
is commutative, where the vertical arrows are unitary operators 
defined by 
$$\begin{aligned}
(S_{\ell}(b))(x_1,\dots,x_{\ell},\xi_1,\dots,\xi_{\ell})
&= \frac{1}{(t_1\cdots t_{\ell})^2}\cdot b(\frac{x_1}{t_1},\dots,\frac{x_{\ell}}{t_{\ell}},
\frac{\xi_1}{t_1},\dots,\frac{\xi_{\ell}}{t_{\ell}})\\
(T_{\ell}(a))(x,y)
&=
\frac{1}{(2\pi)^{l/2}}\int\limits_{\RR^l} 
a(\frac{x+y}{2},\xi)\ee^{\ie\langle x-y,\xi\rangle}\de\xi, \\
(U_{\ell}(K))(v,w)
&=K(v,w)\widetilde{\gamma}_{\ell}(v)^{-1/2}\widetilde{\gamma}_{\ell}(w)^{-1/2}, 
\end{aligned}$$
for $\ell\in\{m-1,m\}$ and the horizontal arrows are isometries defined by 
\allowdisplaybreaks
\begin{align}
(\beta(b))(x,\xi)
&=b(x',\xi')\gamma_{1/2}(x_m)^{1/2}\gamma_{1/8t_m^2}(\xi_m)^{1/2} \nonumber \\
(\alpha(a))(x,\xi)
&=a(x',\xi')\gamma_{t_m/2}(x_m)^{1/2}\gamma_{1/8t_m}(\xi_m)^{1/2} \nonumber\\
(\eta(K))(x,y) 
&=K(x',y')\gamma_{t_m}(x_m)^{1/2}\gamma_{t_m}(y_m)^{1/2} \nonumber\\
(\iota(Q))(v,w) 
&= Q(v',w'). \nonumber
\end{align} 
\end{proposition}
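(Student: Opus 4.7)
The plan is to verify each of the three squares in the diagram separately, proceeding from the bottom upward since the arguments become progressively less routine. The unitarity of the vertical arrows and the isometry property of the horizontal arrows can be handled independently by simple Jacobian computations together with the fact that each $\gamma_t$ is a probability density on $\RR$.

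The bottom square reduces to the multiplicativity of the Gaussian weights. Since $\widetilde{\gamma}_m(v)=\widetilde{\gamma}_{m-1}(v')\gamma_{t_m}(v_m)$ and analogously for $w$, the extra factors $\gamma_{t_m}(v_m)^{1/2}\gamma_{t_m}(w_m)^{1/2}$ introduced by $\eta$ combine with the multiplier $\widetilde{\gamma}_m(v)^{-1/2}\widetilde{\gamma}_m(w)^{-1/2}$ in $U_m$ to produce exactly the multiplier $\widetilde{\gamma}_{m-1}(v')^{-1/2}\widetilde{\gamma}_{m-1}(w')^{-1/2}$ used in $U_{m-1}$; what then remains is the pullback $(v,w)\mapsto(v',w')$, which is precisely $\iota$.

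The middle square is the heart of the proposition, and this is where Lemma~\ref{ind2.5} does the work. I would decompose along the last coordinate by invoking the tensor product compatibility recorded in Remark~\ref{ind2}, writing $T_m=V\circ(T_{m-1}\otimes T_1)\circ W^{-1}$. Setting $F:=(\gamma_{t_m/2}\otimes\gamma_{1/(8t_m)})^{1/2}$, viewed as a function of the last pair $(x_m,\xi_m)$, the definition of $\alpha$ yields $\alpha(a)=W(a\otimes F)$. Hence
\[
T_m(\alpha(a)) = V\bigl(T_{m-1}(a)\otimes T_1(F)\bigr),
\]
and Lemma~\ref{ind2.5} applied with $t=t_m$ delivers $T_1(F)=(\gamma_{t_m}\otimes\gamma_{t_m})^{1/2}$. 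Feeding this back through $V$ recovers precisely $\eta(T_{m-1}(a))$, as desired.

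The top square yields to the same tensor product strategy. The operator $S_\ell$ factors as a tensor product of one-dimensional rescalings on the pairs $(x_i,\xi_i)$, so it suffices to compare the two sides in the last pair $(x_m,\xi_m)$. The required equality then reduces to the elementary transformation rule $\gamma_s(c\,\cdot)=|c|^{-1}\gamma_{s/c^2}(\cdot)$ applied to each of the two Gaussian factors attached by $\beta$, with the Jacobian of the substitution absorbed into the normalizing prefactor of $S_\ell$. The main obstacle throughout the proof is purely the bookkeeping of normalizing constants, most delicately in the middle square where the specific parameters $t_m/2$ and $1/(8t_m)$ appearing in the definition of $\alpha$ must match after applying $T_1$; but this tracking is already the content of Lemma~\ref{ind2.5}, so no further computation is needed beyond invoking it.
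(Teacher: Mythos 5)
Your proposal follows the paper's own proof essentially step for step: the bottom square via the multiplicativity $\widetilde{\gamma}_m(v)=\widetilde{\gamma}_{m-1}(v')\gamma_{t_m}(v_m)$, the middle square by factoring $T_m$ through the tensor decomposition of Remark~\ref{ind2} and applying Lemma~\ref{ind2.5} with $t=t_m$ to identify $T_1\bigl((\gamma_{t_m/2}\otimes\gamma_{1/8t_m})^{1/2}\bigr)=(\gamma_{t_m}\otimes\gamma_{t_m})^{1/2}$, and the top square by a direct Gaussian rescaling computation, exactly as in the paper. The only difference is cosmetic: you invoke the scaling rule in the form $\gamma_s(cx)=|c|^{-1}\gamma_{s/c^2}(x)$ whereas the paper records it as $a^{1/2}\gamma_{t}(ax)=\gamma_{t/a}(x)$, but the method and the use of the auxiliary results coincide.
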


\begin{proof}
It is clear that the horizontal arrows in the diagram are isometries, 
while the vertical arrows are unitary operators. 

Moreover, for $K\in L^2(\RR^{m-1}\times\RR^{m-1})$ we have 
\allowdisplaybreaks
\begin{align}
(U_m(\eta(K)))(v,w)
&=(\eta(K))(v,w)\widetilde{\gamma}_m(v)^{-1/2}\widetilde{\gamma}_m(w)^{-1/2} \nonumber\\
&=K(v',w')\gamma_{t_m}(v_m)^{1/2}\gamma_{t_m}(w_m)^{1/2}\widetilde{\gamma}_m(v)^{-1/2}\widetilde{\gamma}_m(w)^{-1/2} \nonumber\\
&=K(v',w')\widetilde{\gamma}_{m-1}(v')^{-1/2}\widetilde{\gamma}_{m-1}(w')^{-1/2} \nonumber\\
&=(U_{m-1}(K))(v',w') \nonumber\\
&=(\iota(U_{m-1}(K)))(v,w), \nonumber
\end{align} 
hence the lower part of the diagram in the statement is commutative. 
As regards the middle part of the diagram, we have 
\allowdisplaybreaks
\begin{align}
(\eta(T_{m-1}(a)))(x,y) 
&=(T_{m-1}(a))(x',y')\gamma_{t_m}(x_m)^{1/2}\gamma_{t_m}(y_m)^{1/2} 
\nonumber\\
&=(T_{m-1}(a))(x',y')\cdot 
(T_1((\gamma_{t_m/2}\otimes\gamma_{1/8t_m})^{1/2}) )(x_m,y_m) 
\nonumber\\
&=(T_m(a\otimes(\gamma_{t_m/2}\otimes\gamma_{1/8t_m})^{1/2}))
(x',x_m,y',y_m) \nonumber\\
&=(T_m(\alpha(a)))(x,y) \nonumber
\end{align} 
where the second equality follows by Lemma~\ref{ind2.5}, 
while the third equality is a consequence of Remark~\ref{ind2}. 

Finally, by using the fact that  
$$(\forall t,a>0)(\forall x\in\RR) \quad 
a^{1/2}\gamma_{t}(ax)=\gamma_{t/a}(x),$$
it follows by a straightforward computation that the upper part of the diagram in the statement is commutative. 
\end{proof}

\begin{remark}
\normalfont
In order to point out the role of the unitary operators 
$$U_\ell\colon L^2(\RR^\ell\times\RR^\ell)\to L^2(\RR^\ell\times\RR^\ell,\de\widetilde{\gamma}_\ell\times\de\widetilde{\gamma}_\ell)$$ 
in Proposition~\ref{ind3}, we note the following fact. 
The multiplication operator
$$V_\ell\colon L^2(\RR^\ell)\to L^2(\RR^\ell,\de\widetilde{\gamma}_\ell),\quad 
V_\ell f=f(\widetilde{\gamma}_\ell)^{-1/2}$$
is unitary. 
Furthermore, for any $K\in L^2(\RR^\ell\times\RR^\ell)$, 
if we denote the corresponding integral operator by 
$$A_K\colon L^2(\RR^\ell)\to L^2(\RR^\ell), \quad (A_K f)(x)=\int\limits_{\RR^\ell}K(x,y)f(y)\de y,$$
then $V_\ell A_K V_\ell^{-1}\colon L^2(\RR^\ell,\de\widetilde{\gamma}_\ell)\to L^2(\RR^\ell,\de\widetilde{\gamma}_\ell)$ 
is the integral operator whose integral kernel is $U_\ell(K)\in L^2(\RR^\ell\times\RR^\ell,\de\widetilde{\gamma}_\ell\times\de\widetilde{\gamma}_\ell)$. 
\end{remark}

\section{Infinite-dimensional Heisenberg groups and Wigner transforms}\label{Sect5}

In this section we study representations of a special type of infinite-dimensional Heisenberg groups, 
as introduced in the following definition. 
A more general framework can be found for instance in \cite{Nee00} and \cite{DG08}. 

For the sake of completeness we recall here a few facts from \cite[Sect. 3]{BB10c}. 

\begin{definition}\label{heisenberg}
\normalfont
If $\Vc$ is a real Hilbert space, $A\in\Bc(\Vc)$ with 
$(Ax\mid y)=(x\mid Ay)$ for all $x,y\in\Vc$, 
and moreover $\Ker A=\{0\}$, then  
the \emph{Heisenberg algebra} associated with the pair $(\Vc,A)$ is  
the real Hilbert space $\hg(\Vc,A)=\Vc\dotplus\Vc\dotplus\RR$ endowed with the Lie bracket  
defined by 
$[(x_1,y_1,t_1),(x_2,y_2,t_2)]=(0,0,(Ax_1\mid y_2)-(Ax_2\mid y_1))$.   
The corresponding \emph{Heisenberg group} $\HH(\Vc,A)=(\hg(\Vc,A),\ast)$ 
is the Lie group whose underlying manifold is $\hg(\Vc,A)$ and 
whose multiplication is defined by 
$$(x_1,y_1,t_1)\ast(x_2,y_2,t_2)=
(x_1+x_2,y_1+y_2,t_1+t_2+((Ax_1\mid y_2)-(Ax_2\mid y_1))/2) $$
for $(x_1,y_1,t_1),(x_2,y_2,t_2)\in\HH(\Vc,A)$. 
\end{definition}

\subsection*{Gaussian measures and Schr\"odinger representations}
Let $\Vc_{-}$ be a real Hil\-bert space with the scalar product denoted by $(\cdot\mid\cdot)_{-}$. 
For every vector $a\in\Vc_{-}$ and 
every symmetric, nonnegative, injective, \emph{trace-class} operator $K$ on $\Vc_{-}$ 
there exists a unique probability Borel measure~$\gamma$ on $\Vc_{-}$
such that 
$$(\forall x\in\Vc_{-})\quad 
\int\limits_{\Vc_{-}}\ee^{\ie(x\mid y)_{-}}\de\gamma(y)
=\ee^{\ie(a\mid x)_{-}-\frac{1}{2}(Kx\mid x)_{-}} $$
(see for instance \cite[Th. I.2.3]{Kuo75}). 
We also have  
$$a=\int\limits_{\Vc_{-}}y\ \de\gamma(y)
\quad\text{ and }\quad 
Kx=\int\limits_{\Vc_{-}}(x\mid y)_{-}\cdot(y-a)\de\gamma(y) 
\text{ for all }x\in\Vc_{-}, $$
where the integrals are weakly convergent, and $\gamma$ is called the  
\emph{Gaussian measure with the mean $a$ and the variance $K$}.  

Let us assume that the Gaussian measure $\gamma$ is centered, that is, $a=0$. 
Denote $\Vc_{+}:=\Ran K$ and $\Vc_0:=\Ran K^{1/2}$ endowed 
with the scalar products $(Kx\mid Ky)_{+}:=(x\mid y)_{-}$ and 
$(K^{1/2}x\mid K^{1/2}y)_0:=(x\mid y)_{-}$, respectively, for all $x,y\in\Vc_{-}$, 
which turn the linear bijections 
$K\colon\Vc_{-}\to\Vc_{+}$ and $K^{1/2}\colon\Vc_{-}\to\Vc_0$ 
into isometries. 
We thus get the real Hilbert spaces
$$\Vc_{+}\hookrightarrow\Vc_0\hookrightarrow\Vc_{-} $$
where the inclusion maps are Hilbert-Schmidt operators, 
since so is $K^{1/2}\in\Bc(\Vc_{-})$. 
Also, the scalar product of $\Vc_0$ extends to a duality pairing $(\cdot\mid\cdot)_0\colon\Vc_{-}\times\Vc_{+}\to\RR$. 

We also recall that for every $x\in\Vc_{+}$ the translated measure $\de\gamma(-x+\cdot)$ 
is absolutely continuous with respect to $\de\gamma(\cdot)$ 
and we have the Cameron-Martin formula
$$\de\gamma(-x+\cdot)=\rho_x(\cdot)\de\gamma(\cdot) 
\quad\text{ with }\rho_x(\cdot)=\ee^{(\cdot\mid x)_0-\frac{1}{2}(x\mid x)_0}.$$  
(This actually holds true for every $x\in\Vc_0$, 
by suitably defining the function $\rho_x(\cdot)$ no longer as a continuous function, 
but only almost everywhere; 
see for instance 
\cite[Lemma I.4.7 and Th. II.3.1]{Kuo75}.)

\begin{definition}\label{sch_def}
\normalfont 
Let $\Vc_{+}$ be a real Hilbert space with the scalar product denoted by $(x,y)\mapsto (x\mid y)_{+}$. 
Also let $A\colon\Vc_{+}\to\Vc_{+}$ be a nonnegative, symmetric, injective, trace-class operator.  
Define $\Vc_0$ and $\Vc_{-}$ as the completions of $\Vc_{+}$ with respect to the scalar products  
$$(x,y)\mapsto (x\mid y)_0:=(A^{1/2}x\mid A^{1/2}y)_{+}$$ 
and 
$$(x,y)\mapsto (x\mid y)_{-}:=(Ax\mid Ay)_{+},$$ 
respectively.  
Then the operator $A$ uniquely extends to a nonnegative, symmetric, injective, trace-class operator $K\in\Bc(\Vc_{-})$,   
hence by the above observations one obtains the centered Gaussian measure $\gamma$ on $\Vc_{-}$ with the variance $K$. 

One can also construct the Heisenberg group $\HH(\Vc_{+},A)$. 
The \emph{Schr\"odinger representation} 
$\pi\colon\HH(\Vc_{+},A)\to\Bc(L^2(\Vc_{-},\gamma))$ is defined by 
$$\pi(x,y,t)\phi=\rho_x(\cdot)^{1/2}
\ee^{\ie(t+(\cdot\mid y)_0+\frac{1}{2}(x\mid y)_0)}\phi(-x+\cdot) $$
whenever $(x,y,t)\in\HH(\Vc_{+},A)$ and $\phi\in L^2(\Vc_{-},\gamma)$. 
This is a continuous unitary irreducible representation 
of the Heisenberg group $\HH(\Vc_{+},A)$ 
by \cite[Th.~5.2.9 and 5.2.10]{Hob06} and 
Proposition~\ref{sch_irred} below.
\end{definition}

\begin{remark}\label{sch_neeb}
\normalfont
More general Schr\"odinger representations of infinite-dimensional Hei\-senberg groups are 
described in \cite[Prop.~II.4.6]{Nee00} by using cocycles and reproducing kernel Hilbert spaces. 
\end{remark}

The following result is known, but we recall here from \cite[Rem. 3.6]{BB10c} the method of proof 
since its constructions will be needed also for Theorem~\ref{orth_th}.

\begin{proposition}
\label{sch_irred}
The representation 
$\pi\colon\HH(\Vc_{+},A)\to\Bc(L^2(\Vc_{-},\gamma))$ 
from Definition~\ref{sch_def} is irreducible. 
\end{proposition}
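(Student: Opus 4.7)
The plan is a standard Schur-type argument: I would take any bounded operator $T\in\Bc(L^2(\Vc_{-},\gamma))$ commuting with $\pi(g)$ for every $g\in\HH(\Vc_{+},A)$ and show that $T$ must be a scalar multiple of the identity. Since $\pi(0,0,t)=\ee^{\ie t}\id$, the center plays no role, and I only need to exploit commutation with the two families $\pi(x,0,0)$ and $\pi(0,y,0)$ for $x,y\in\Vc_{+}$. The explicit form of $\pi$ splits these into multiplication by characters and Cameron--Martin translations twisted by $\rho_x^{1/2}$, which organizes the proof into three steps.

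First, $\pi(0,y,0)\phi=\ee^{\ie(\cdot\mid y)_0}\phi$ is the multiplication operator $M_{\ee^{\ie(\cdot\mid y)_0}}$, well defined on $\Vc_{-}$ via the duality pairing $(\cdot\mid\cdot)_0\colon\Vc_{-}\times\Vc_{+}\to\RR$. Commutation of $T$ with all such multiplications for $y\in\Vc_{+}$ implies $T$ commutes with the entire von Neumann algebra they generate. The crucial observation is that this von Neumann algebra coincides with the maximal abelian subalgebra $L^\infty(\Vc_{-},\gamma)\subset\Bc(L^2(\Vc_{-},\gamma))$: indeed, the $*$-algebra of trigonometric polynomials in the Gaussian random variables $(\cdot\mid y)_0$, $y\in\Vc_{+}$, is dense in $L^2(\Vc_{-},\gamma)$, since the cylindrical $\sigma$-algebra generated by these linear functionals coincides up to $\gamma$-null sets with the Borel $\sigma$-algebra of $\Vc_{-}$. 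Since $L^\infty(\Vc_{-},\gamma)$ is a MASA in $\Bc(L^2(\Vc_{-},\gamma))$, its commutant is itself, so $T=M_f$ for some $f\in L^\infty(\Vc_{-},\gamma)$.

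Second, for $x\in\Vc_{+}$ I would write out $T\pi(x,0,0)\phi=\pi(x,0,0)T\phi$ for arbitrary $\phi\in L^2(\Vc_{-},\gamma)$. After cancelling the $\gamma$-a.e.\ positive factor $\rho_x(\cdot)^{1/2}$, this reduces to $f(v)=f(-x+v)$ for $\gamma$-a.e.\ $v$, for each fixed $x\in\Vc_{+}$. Hence $f$ is invariant under the translation action of $\Vc_{+}$.

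Finally, I would invoke ergodicity of the Gaussian measure $\gamma$ under translations by any subspace dense in the Cameron--Martin space $\Vc_0$, and $\Vc_{+}$ is such a subspace since $\Ran K^{1/2}$ is dense in $\Vc_{-}$. This classical ergodicity, deducible from the Wiener chaos decomposition (a translation-invariant $L^\infty$ function must have vanishing projection onto every chaos of positive order, leaving only the constants in the zeroth chaos), forces $f$ to be $\gamma$-a.e.\ constant, whence $T$ is a scalar multiple of $\id$. The main technical obstacle is Step~1: justifying that the characters $\ee^{\ie(\cdot\mid y)_0}$ with $y$ restricted to the small subspace $\Vc_{+}$ already generate the full MASA $L^\infty(\Vc_{-},\gamma)$; this ultimately rests on the construction of $\gamma$ via the Gaussian triple $\Vc_{+}\hookrightarrow\Vc_0\hookrightarrow\Vc_{-}$ with Hilbert--Schmidt inclusions, which guarantees that the cylindrical functionals indexed by $\Vc_{+}$ suffice to generate the measurable structure on $\Vc_{-}$.
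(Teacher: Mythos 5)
Your argument is correct in outline, but it follows a genuinely different route from the paper. The paper proves irreducibility by finite-dimensional approximation: it diagonalizes $A$, restricts to the finite-dimensional Heisenberg groups $\HH(\Vc_{n,+},A_n)$ whose Schr\"odinger representations are irreducible by the classical (Stone--von Neumann) theory, observes that the isometries $f\mapsto f\otimes\1_n$ intertwine $\pi_n$ with $\pi_{n+1}$, and invokes the theorem of Kolomycev--Samoilenko on inductive limits of irreducible representations together with density of $\HH(\Vc_{\infty,+},A_\infty)$ in $\HH(\Vc_{+},A)$; this has the added benefit that the filtration and the intertwiners are exactly the structures reused in the proof of Theorem~\ref{orth_th}. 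You instead run a direct commutant computation: the characters $\ee^{\ie(\cdot\mid y)_0}$, $y\in\Vc_{+}$, generate the MASA $L^\infty(\Vc_{-},\gamma)$, so any $T$ in the commutant is a multiplication operator $M_f$; commutation with the twisted translations $\pi(x,0,0)$ gives $f(-x+\cdot)=f$ $\gamma$-a.e.\ for all $x\in\Vc_{+}$; and ergodicity of $\gamma$ under translations by a $\Vert\cdot\Vert_0$-dense subspace of the Cameron--Martin space forces $f$ to be constant. Two points deserve to be made sharp rather than left as ``ultimately rests on the construction'': first, your worry about $\Vc_{+}$ being ``small'' is unfounded for a clean reason, namely $(\cdot\mid y)_0=(\cdot\mid K^{-1}y)_{-}$ and $K\colon\Vc_{-}\to\Vc_{+}$ is a bijection, so the functionals indexed by $\Vc_{+}$ are precisely \emph{all} continuous linear functionals on the separable space $\Vc_{-}$, whence they generate the full Borel $\sigma$-algebra and the generated von Neumann algebra is $L^\infty(\Vc_{-},\gamma)$; second, the density of $\Vc_{+}=\Ran K$ in $(\Vc_0,\Vert\cdot\Vert_0)$ follows since $K^{1/2}\colon\Vc_{-}\to\Vc_0$ is an isometric bijection carrying the dense subspace $\Ran K^{1/2}\subseteq\Vc_{-}$ onto $\Ran K$, and the ergodicity you invoke (invariance under translations from a dense subspace of the Cameron--Martin space implies constancy a.e.) is a classical Gaussian zero--one/ergodicity theorem that should be cited or proved (e.g.\ by smoothing with the Ornstein--Uhlenbeck semigroup and using that vanishing directional derivatives along a dense subspace of $\Vc_0$ kill all chaoses of positive order). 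With those two points supplied, your proof is complete and more self-contained measure-theoretically, at the price of not producing the inductive-limit machinery the paper needs later.
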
 

\begin{proof}
Let $t_1\ge t_2\ge\cdots$ ($>0$) be the eigenvalues of $A$ counted according to their multiplicities. 
Since $A$ is a self-adjoint trace-class operator and $\Ker A=\{0\}$, there exists an orthonormal basis $\{v_k\}_{k\ge1}$ in $\Vc_{+}$ such that $Av_k=t_k v_k$ for every $k\ge1$. 
For every integer $n\ge1$ let 
$$\Vc_{n,+}=\spa\{v_1,\dots,v_n\}.$$ 
It follows that $\dim\Vc_{n,+}<\infty$. 
We have 
$$\Vc_{1,+}\subseteq\Vc_{2,+}\subseteq\cdots\subseteq
\bigcup\limits_{n\ge1}\Vc_{n,+}\subseteq\Vc_{+}$$
and $\bigcup\limits_{n\ge1}\Vc_{n,+}$ is a dense subspace of $\Vc_{+}$. 
Let us denote by $A_n$ the restriction of $A$ to $\Vc_{n,+}$. 
Then $\HH(\Vc_{n,+},A_n)$ is a finite-dimensional Heisenberg group, 
hence it is well known that its Schr\"odinger representation 
$\pi_n\colon\HH(\Vc_{n,+},A_n)\to\Bc(L^2(\Vc_{n,-},\gamma_n))$ 
is irreducible, where $\gamma_n$ is the Gaussian measure on the finite-dimensional space $\Vc_{n,-}$ obtained out of 
the pair $(\Vc_{+,n},A_n)$ 
by the construction outlined at the very beginning of Definition~\ref{sch_def}. 
Note that 
$$\HH(\Vc_{1,+},A_1)\subseteq\HH(\Vc_{2,+},A_2)\subseteq\cdots\subseteq
\bigcup\limits_{n\ge1}\HH(\Vc_{n,+},A_n)=:\HH(\Vc_{\infty,+},A_\infty)
\subseteq \HH(\Vc_{+},A) $$
and $\HH(\Vc_{\infty,+},A_\infty)$ is a dense subgroup of $\HH(\Vc_{+},A)$, 
hence the Schr\"odinger representation $\pi\colon\HH(\Vc_{+},A)\to\Bc(L^2(\Vc_{-},\gamma))$ is irreducible if and only if so is its restriction $\pi\vert_{\HH(\Vc_{\infty,+},A_\infty)}$. 

On the other hand, if we denote by $\1_n$ the function identically equal to 1 on the orthogonal complement $\Vc_{n+1,+}\ominus\Vc_{n,+}$, then it is straightforward to check that the operator 
\begin{equation}\label{sch_irred_eq1}
L^2(\Vc_{n,-},\gamma_n)\to L^2(\Vc_{n+1,-},\gamma_{n+1}), 
\quad f\mapsto f\otimes \1_n
\end{equation}
is an isometry and intertwines the representations $\pi_n$ and $\pi_{n+1}$. 
We can thus make the sequence of representations $\{\pi_n\}_{n\ge1}$ into an inductive system of irreducible unitary representations and then their inductive limit $\pi\vert_{\HH(\Vc_{\infty,+},A_\infty)}
=\mathop{\rm ind}\limits_{n\to\infty}\pi_n$ is irreducible 
(see for instance \cite{KS77}). 
As noted above, this implies that the Schr\"odinger representation 
$\pi\colon\HH(\Vc_{+},A)\to\Bc(L^2(\Vc_{-},\gamma))$ 
of Definition~\ref{sch_def} is irreducible. 
\end{proof}

\begin{remark}\label{algdual}
\normalfont
Let $\Xc$ be a real locally convex Hausdorff space and denote by $\Xc'^{\alg}$ 
the space of all linear functionals on~$\Xc$ endowed with the locally convex topology of pointwise convergence on~$\Xc$. 
If $B$ is any algebraic basis in $\Xc$, then 
every $\xi=\{\xi_b\}_{b\in B}\in\RR^B$ corresponds to a linear functional 
$$\Psi_B(\xi)\colon\Xc\to\RR,\quad 
\sum\limits_{b\in B}x_b b\mapsto \sum\limits_{b\in B}\xi_b x_b b,$$
and we get the linear topological isomorphism 
$$\Psi_B\colon\RR^B\to \Xc'^{\alg},\quad 
\xi\mapsto \Psi_B(\xi)$$
where $\RR^B$ is endowed with its natural weak topology 
(that is, projective limit of finite-dimensional linear spaces; 
see \cite[Def.~A2.5]{HM07}). 

For later use we also note that if the topological dual $\Xc'$ 
is endowed with the weak$^*$-topology, then the following assertions hold: 
\begin{enumerate}
\item\label{dualalg_item1} 
The locally convex space $\Xc'^{\alg}$ is complete and 
the topological dual $\Xc'$ is a dense subspace of $\Xc'^{\alg}$. 
Therefore $\Xc'^{\alg}$ is isomorphic to the completion of $\Xc'$ 
as a topological vector space (hence as a uniform space). 
\item\label{dualalg_item2} 
The mapping 
$$\UCb(\Xc'^{\alg})\to\UCb(\Xc'),\quad f\mapsto f\vert_{\Xc'} $$
is an isometric $*$-isomorphism of unital $C^*$-algebras. 
\item\label{dualalg_item3}
If $B_0\subseteq B$ and we consider the natural injective linear map 
$$\iota_{B_0,B}\colon \RR^{B_0}\hookrightarrow\RR^B
\mathop{\longrightarrow}\limits^{\Psi_B}\Xc'^{\alg}$$
then we get a mapping 
$$\UCb(\Xc'^{\alg})\to\UCb(\RR^{B_0}),\quad f\mapsto f\circ\iota_{B_0,B} $$
which is a surjective $*$-homomorphism of unital $C^*$-algebras. 
\end{enumerate}
In fact, Assertion~\eqref{dualalg_item1} follows by a result of \cite{Gro50}; 
see also \cite{Dev72}, \cite[Sol.~30]{Hal82}.  
A version of this property for general uniform spaces was established 
in~\cite{Feo61}. 
Assertion~\eqref{dualalg_item2} follows by using the fact that the uniformly continuous functions extend uniquely to the completion; 
see for instance~\cite[Ch.~II, \S 3]{Bo71}. 
As for Assertion~\eqref{dualalg_item3}, 
we just have to note that if $p_{B,B_0}\colon\RR^B\to\RR^{B_0}$ denotes the natural projection, 
then $(p_{B,B_0}\circ(\Psi_B)^{-1})\circ\iota_{B_0,B}=\id_{\RR^{B_0}}$. 
\end{remark}

In the following statement we use notation from Definition~\ref{sch_def} 
and moreover we denote by $\Sg_2(\cdot)$ the Hilbert space of Hilbert-Schmidt operators on some complex Hilbert space. 

\begin{theorem}\label{orth_th}
Let $\Vc_{+}$ be a separable real Hilbert space and 
$A\colon\Vc_{+}\to\Vc_{+}$ be a nonnegative, symmetric, injective, trace-class operator with the eigenvalues 
$$t_0\ge t_1\ge\cdots>0,$$ 
counted according to their multiplicities. 
Let $\Vc_{\infty,+}$ denote the linear span of the eigenvectors of $A$ and define 
$$\theta\colon\Xi:=\Vc_{\infty,+}\times\Vc_{\infty,+}\to\hg(\Vc_{+},A), \quad 
(x,y)\mapsto(x,y,0). $$
Then there exist the locally convex spaces $\Gamma\hookrightarrow\Mc(\hg(\Vc_{+},A)')$ 
and $\Hc_{\Xi,\infty}\hookrightarrow\Hc:=L^2(\Vc_{-},\gamma)$ 
such that the following assertions hold: 
\begin{enumerate}
\item\label{orth_th_item1} 
$\Gamma$ and $\theta$ are compatible. 
\item\label{orth_th_item2} 
The Schr\"odinger representation 
$\pi\colon\HH(\Vc_{+},A)\to\Bc(L^2(\Vc_{-},\gamma))$ satisfies the orthogonality relations along $\theta$. 
\item\label{orth_th_item3} For the Hilbert space obtained as the completion of $\Gamma$ we have 
$$\Gamma_2\simeq 
L^2\Bigl(\RR^{\NN}\times\RR^{\NN},
\Bigl(\bigotimes_{j\in\NN}\de\gamma_{1/2}\Bigr)
\otimes\Bigl(\bigotimes_{j\in\NN}\de\gamma_{1/8t_j^2}\Bigr)\Bigr)
\hookrightarrow\Mc(\hg(\Vc_{+},A)') $$
and the operator $\Op^\theta\colon\Gamma_2\to\Sg_2(L^2(\Vc_{-},\gamma))$ defined in \eqref{opcalc_eq1} is unitary.
\end{enumerate} 
\end{theorem}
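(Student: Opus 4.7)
The plan is to construct the Wigner transform as the inductive limit of the finite-dimensional Wigner transforms associated with the subgroups $\HH(\Vc_{n,+},A_n)$ from the proof of Proposition~\ref{sch_irred}, and then identify its target space and reconcile the construction with the abstract framework of Section~\ref{Sect3}. The space of ``smooth vectors'' should be
$$\Hc_{\Xi,\infty}:=\bigcup_{n\ge 1}\Hc_n,\qquad \Hc_n:=L^2(\Vc_{n,-},\gamma_n),$$
with the locally convex inductive limit topology, where each $\Hc_n$ is embedded in $\Hc=L^2(\Vc_{-},\gamma)$ via the isometries $f\mapsto f\otimes\1_n$ of \eqref{sch_irred_eq1}; density in $\Hc$ comes from Proposition~\ref{sch_irred}.

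For each fixed $n$, the representation $\pi_n$ is the classical Schr\"odinger representation of $\HH(\Vc_{n,+},A_n)$, realised on a Gaussian $L^2$-space. Here I would unwind the representation coefficient $(\pi_n(x,y,0)\phi\mid\psi)$ by transporting it, via the unitary $U_n$ of Proposition~\ref{ind3}, to a Lebesgue $L^2$-space, where it becomes an honest Fourier transform of the Weyl symbol computed by $T_n^{-1}$. Pulling this back through the rescaling $S_n$ identifies the finite-dimensional Wigner transform $\Wig_n$ as a unitary
$$\Wig_n\colon \Hc_n\botimes\overline{\Hc_n}\to \Gamma_{2,n}:=L^2\Bigl(\RR^{n}\times\RR^{n},\bigotimes_{j=0}^{n-1}\de\gamma_{1/2}\otimes\bigotimes_{j=0}^{n-1}\de\gamma_{1/8t_j^2}\Bigr),$$
and its image in $\Mc(\hg(\Vc_{n,+},A_n)')$ via the inclusion $\Gamma_{2,n}\hookrightarrow\Mc(\hg(\Vc_{n,+},A_n)')$ (where the infinite-product Gaussian collapses to the finite one) matches the representation coefficient through the Fourier transform of Definition~\ref{gen3}. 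The finite-dimensional case of assertions \eqref{orth_th_item1}--\eqref{orth_th_item3} is then classical.

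The crux is compatibility between levels. For $\phi,\psi\in\Hc_n$, using the isometry $\phi\mapsto\phi\otimes\1_n$ into $\Hc_{n+1}$, the representation coefficient of $\pi_{n+1}$ at a point $(x,y,0)$ with $x,y\in\Vc_{n+1,+}$ picks up a Gaussian factor in the new coordinate $(x_{m},y_{m})$ coming from the explicit action of the shift and modulation on the function $\1_n$ in $L^2(\RR,\gamma_{t_{n+1}})$. Applied symbolically, this extra factor is exactly the one produced by Lemma~\ref{ind2.5}, and Proposition~\ref{ind3} then asserts that the finite-dimensional Wigner transforms fit into a commutative diagram
$$\Wig_{n+1}(\phi\otimes\1_n,\psi\otimes\1_n)=\beta\bigl(\Wig_n(\phi,\psi)\bigr)$$
after passing through $S_n, T_n, U_n$. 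In other words, extending to one more coordinate in the source corresponds on the target side exactly to tensoring with $\gamma_{1/2}\otimes\gamma_{1/8t_{n+1}^2}$, whose total mass is $1$. The inductive limit of the isometries $\beta_n\colon\Gamma_{2,n}\hookrightarrow\Gamma_{2,n+1}$ thus completes to the claimed infinite Gaussian product Hilbert space $\Gamma_2$, and the $\Wig_n$'s assemble into the desired unitary $\Wig\colon\Hc\botimes\overline{\Hc}\to\Gamma_2$.

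Finally, to place $\Gamma$ inside $\Mc(\hg(\Vc_{+},A)')$ and verify the compatibility of $\Gamma$ and $\theta$ in the sense of Definition~\ref{opcalc0.5}, I would use Remark~\ref{algdual}: after fixing the algebraic basis of $\hg(\Vc_{+},A)$ built from the eigenvectors $\{v_k\}$ and the central generator, the algebraic dual $\hg(\Vc_{+},A)'^{\alg}$ is identified with $\RR^{\NN}\times\RR^{\NN}\times\RR$, on which the infinite product Gaussian measure in the statement is a bona fide Radon probability measure pairing against $\UCb$. Its Fourier transform coincides with $(\pi(\exp_G(\theta(\cdot)))\phi\mid\psi)$ on $\Xi$ by passage to the limit from the finite-dimensional case; injectivity on the image comes from Proposition~\ref{gen3.5}, so $\Gamma:=\Fc_\Xi^{-1}(\Qc_\Xi)$ with $\Qc_\Xi$ the image of the representation coefficients is well defined, and $\Gamma_2$ is its Hilbert completion. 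Assertion \eqref{orth_th_item3} on $\Op^\theta$ being unitary is then immediate from Remark~\ref{opcalc5}, which identifies $\Op^\theta\circ\Wig$ with the rank-one operator map $(\phi,\psi)\mapsto(\cdot\mid\psi)\phi$, a unitary $\Hc\botimes\overline{\Hc}\to\Sg_2(\Hc)$.

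The main obstacle I anticipate is the compatibility step: verifying that the Gaussian factor produced by acting with $\pi_{n+1}(x,y,0)$ on $\phi\otimes\1_n$ in the new coordinate matches \emph{exactly} the factor $\gamma_{1/2}\otimes\gamma_{1/8t_{n+1}^2}$ appearing in the target measure, so that the inductive system $(\Wig_n)$ is genuinely compatible and passes to an isometric limit. This is really the content of Proposition~\ref{ind3}, and everything else is essentially bookkeeping on top of it and on top of Remark~\ref{algdual}.
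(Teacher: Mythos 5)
Your proposal follows essentially the same route as the paper's proof: finite-level Wigner transforms $\Wig_m=(U_m\circ T_m\circ S_m)^{-1}$ built from the classical Weyl calculus, compatibility of the inductive system via the commutative diagram of Proposition~\ref{ind3} (whose key input is Lemma~\ref{ind2.5}), passage to the inductive limit to obtain the unitary onto the infinite product Gaussian $L^2$-space, the embedding of that space into $\Mc(\hg(\Vc_{+},A)')$ via Remark~\ref{algdual}, and unitarity of $\Op^\theta$ via Remark~\ref{opcalc5}. The only (harmless) deviation is your choice of $\Hc_{\Xi,\infty}$ as the union of the full finite-level spaces $L^2(\Vc_{n,-},\gamma_n)$, whereas the paper takes the inductive limit of the spaces $\Hc_{m,\infty}$ of smooth vectors of the representations $\pi_m$.
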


\begin{proof}
We shall use the notation of Remark~\ref{sch_irred}. 
For every $k\ge0$ we have $\Vert v_k\Vert_{-}=\Vert Av_k\Vert_{+}=t_k$ 
hence, if we denote $e_k=t_k^{-1}v_k$,   
then $\{e_k\}_{k\ge1}$ is an orthonormal basis in $\Vc_{-}$, 
and we use it to perform the identifications $\Vc_{-}\simeq\ell^2_{\RR}(\NN)$ 
and $\Vc_{m,-}\simeq\RR^m\hookrightarrow\ell^2_{\RR}(\NN)$ for $m=1,2,\dots$. 

For $m\ge 1$ let us denote by 
$$\Hc_{m,\infty}:=\{\varphi\in L^2(\Vc_{m,-},\widetilde{\gamma}_m)\mid \pi_m(\cdot)\varphi\in\Ci(\HH(\Vc_{m,+},A_m),L^2(\Vc_{m,-},\widetilde{\gamma}_m))\}$$ 
the space of smooth vectors for the representation 
$$\pi_m\colon\HH(\Vc_{m,+},A_m)\to\Bc(L^2(\Vc_{m,-},\widetilde{\gamma}_m))
=\Bc(L^2(\RR^m,\de \widetilde{\gamma}_m))$$
where $\de\widetilde{\gamma}_m$ is the Gaussian measure on $\RR^m$ 
as in Proposition~\ref{ind3}. 
(See for instance \cite{Nee10} and the references therein for differentiability of vectors in Lie group representation theory.)
It is clear that the operators of the type \eqref{sch_irred_eq1} take 
$\Hc_{m,\infty}$ into $\Hc_{m+1,\infty}$. 
We thus get an inductive limit of Fr\'echet spaces continuously and densely embedded into 
an inductive limit of Hilbert spaces, 
$$\Hc_{\Xi,\infty}:=\ind\limits_{m\to\infty}\Hc_{m,\infty}
\hookrightarrow 
\ind\limits_{m\to\infty}L^2(\Vc_{m,-},\widetilde{\gamma}_m)
=L^2(\Vc_{-},\gamma).$$
It is well known that for every $m\ge1$ the representation $\pi_m$ satisfies the orthogonality relations along the mapping 
$$\theta\vert_{\Hc_{m,\infty}\times\Hc_{m,\infty}}\colon 
\Hc_{m,\infty}\times\Hc_{m,\infty}\to\hg(\Vc_{m,+},A_m)$$
and the corresponding unitary operator (see \eqref{opcalc4_eq1}),  
denoted by 
$$\Wig_m\colon L^2(\Vc_{m,-},\widetilde{\gamma}_m)\botimes 
\overline{L^2(\Vc_{m,-},\widetilde{\gamma}_m)}
\to L^2(\RR^m\times\RR^m),$$
is given by 
$\Wig_m=(U_m\circ T_m\circ S_m)^{-1}$,  
where we use the notation of Proposition~\ref{ind3}. 
It follows by that proposition that for the Gaussian Radon measures 
$\mu_1:=\bigotimes\limits_{j\in\NN}\de\gamma_{1/2}$ and 
$\mu_2:=\bigotimes\limits_{j\in\NN}\de\gamma_{1/8t_j^2}$ on $\RR^{\NN}$ we obtain a unitary operator 
$$\Wig=\ind\limits_{m\to\infty}\Wig_m\colon  
L^2(\Vc_{-},\gamma)\botimes 
\overline{L^2(\Vc_{-},\gamma)}
\to L^2(\RR^{\NN}\times\RR^{\NN},\de \mu_1\times\de\mu_2).$$
By using Remark~\ref{algdual} we now obtain natural continuous injective linear maps 
\allowdisplaybreaks
\begin{align}
L^2(\RR^{\NN}\times\RR^{\NN},\de \mu_1\times\de\mu_2)
& \hookrightarrow 
L^1(\RR^{\NN}\times\RR^{\NN},\de \mu_1\times\de\mu_2) \nonumber\\
&\hookrightarrow 
\Mc(\RR^{\NN}\times\RR^{\NN}) \nonumber\\
&\hookrightarrow \Mc((\Vc_{+}\times\Vc_{+})'^{\alg}) \nonumber\\
&\simeq \Mc((\Vc_{+}\times\Vc_{+})') \nonumber\\
&\hookrightarrow \Mc(\hg(\Vc_{+},A)'). \nonumber
\end{align}
Finally, if we define 
$$\Gamma:=\Wig(\Hc_{\Xi,\infty}\otimes\overline{\Hc_{\Xi,\infty}})$$
endowed with the topology which makes the linear isomorphism 
$$\Wig\colon \Hc_{\Xi,\infty}\otimes\overline{\Hc_{\Xi,\infty}}\to\Gamma$$ 
into a topological isomorphism, 
then it is easily seen that $\Gamma$ and the mapping $\theta$ are compatible, 
and this concludes the proof. 
\end{proof}

\begin{remark}
\normalfont
The infinite-dimensional pseudo-differential calculus of \cite{AD96} and \cite{AD98} 
can be recovered as a  
special case of the operator calculus from our Definition~\ref{opcalc1} for 
the Schr\"odinger representations introduced in Definition~\ref{sch_def} above. 
Compare for instance \cite[Prop.~3.7]{AD98}. 
\end{remark}


\end{document}